\newenvironment{reduce}
 {\hbox\bgroup\scriptsize$\displaystyle}
 {$\egroup}
\newcommand{\suchthat}{\;\ifnum\currentgrouptype=16 \middle\fi|\;}
\newcommand{\subscr}[2]{#1_{\textup{#2}}}
\newcommand{\setdef}[2]{\{#1 \; | \; #2\}}
\newcommand{\imagunit}{\mathrm{i}}
\newcommand*\mc[0]{\mathcal}  
\newcommand{\diag}{\mathrm{diag}} 
\newcommand{\colvec}[2][.8]{%
  \scalebox{#1}{%
    \renewcommand{\arraystretch}{.8}%
    $\begin{bmatrix}#2\end{bmatrix}$%
  }
}
\newcommand\oprocendsymbol{\hbox{$\square$}}
\newcommand\oprocend{\relax\ifmmode\else\unskip\hfill\fi\oprocendsymbol}
\DeclareSymbolFont{bbold}{U}{bbold}{m}{n}
\DeclareSymbolFontAlphabet{\mathbbold}{bbold}
\newcommand{\vect}[1]{\mathbbold{#1}}
\newtheorem{theorem}{Theorem}
\newtheorem{lemma}[theorem]{Lemma}
\newtheorem{corollary}[theorem]{Corollary}
\newtheorem{definition}[theorem]{Definition}
\newtheorem{example}[theorem]{Example}
\newtheorem{remark}[theorem]{Remark}
\definecolor{orcidlogocol}{HTML}{A6CE39}
\tikzset{
	orcidlogo/.pic={
		\fill[orcidlogocol] svg{M256,128c0,70.7-57.3,128-128,128C57.3,256,0,198.7,0,128C0,57.3,57.3,0,128,0C198.7,0,256,57.3,256,128z};
		\fill[white] svg{M86.3,186.2H70.9V79.1h15.4v48.4V186.2z}
		svg{M108.9,79.1h41.6c39.6,0,57,28.3,57,53.6c0,27.5-21.5,53.6-56.8,53.6h-41.8V79.1z M124.3,172.4h24.5c34.9,0,42.9-26.5,42.9-39.7c0-21.5-13.7-39.7-43.7-39.7h-23.7V172.4z}
		svg{M88.7,56.8c0,5.5-4.5,10.1-10.1,10.1c-5.6,0-10.1-4.6-10.1-10.1c0-5.6,4.5-10.1,10.1-10.1C84.2,46.7,88.7,51.3,88.7,56.8z};
	}
}
\newcommand\orcidicon[1]{\href{https://orcid.org/#1}{\mbox{\scalerel*{
				\begin{tikzpicture}[yscale=-1,transform shape]
				\pic{orcidlogo};
				\end{tikzpicture}
			}{|}}}}
\begin{document}

\title{Singular Perturbation and Small-signal Stability\\ for Inverter Networks}

\author{Saber~Jafarpour,~\IEEEmembership{Member,~IEEE,}
        Victor Purba,~\IEEEmembership{Student Member,~IEEE,} Brian B. Johnson,~\IEEEmembership{Member,~IEEE,} \\
        Sairaj V. Dhople,
        ~\IEEEmembership{Member,~IEEE,} and~Francesco Bullo,~\IEEEmembership{Fellow,~IEEE}
\thanks{S. Jafarpour and F. Bullo are with the Center of Control,
  Dynamical Systems and Computation, University of California Santa
  Barbara, CA 93106, USA. E-mail: (saber.jafarpour, bullo@engineering.ucsb.edu).}
\thanks{V. Purba and S. V. Dhople are with Department of Electrical and Computer Engineering at the
  University of Minnesota, Minneapolis, MN 55414 USA. E-mail: (purba002,  dhople@umn.edu).}
\thanks{Brian B. Johnson is with the Department of Electrical Engineering at University of Washington,
Seattle, Washington, WA 98195 USA. E-mail: (brianbj@uw.edu) }}




\markboth{IEEE Transactions on Control of Network Systems, Submitted}%
{Jafarpour \MakeLowercase{\textit{et al.}}}
%



\maketitle

\begin{abstract}
This paper examines small-signal stability of electrical networks composed dominantly of three-phase grid-following inverters. We show that the mere existence of a high-voltage power flow solution does not necessarily imply small-signal stability; this motivates us to develop a framework for stability analysis that systematically acknowledges inverter dynamics. We identify a suitable time-scale decomposition for the inverter dynamics, and using singular perturbation theory, obtain an analytic sufficient condition to verify small-signal stability. Compared to the alternative of performing an eigenvalue analysis of the full-order network dynamics, our analytic sufficient condition reduces computational complexity and yields insights on the role of network topology and constitution as well as inverter-filter and control parameters on small-signal stability. Numerical simulations for a radial network validate the approach and illustrate the efficiency of our analytic conditions for designing and monitoring grid-tied inverter networks. 
\end{abstract}

\begin{IEEEkeywords}
networks of inverters, dynamical system analysis, stability analysis
\end{IEEEkeywords}

%
\IEEEpeerreviewmaketitle

\section{Introduction}

\paragraph*{Problem description and motivation} The ongoing shift from fossil-fuel-driven synchronous generators to
power-electronics-interfaced renewable energy is leading to changes in how
power grids are modeled, analyzed, and controlled. While synchronous
generators are generally rated at several hundreds of MVA and installed on
the transmission backbone, power electronics inverters are distributed
across both transmission and distribution subsystems and are generally much
smaller in capacity. Furthermore, synchronous generators have large rotating
masses that buffer supply-demand fluctuations and limit frequency
excursions during transients, whereas inverters have very
different dynamics (attributable dominantly to output filters and digital
controllers~\cite{AY-RI:10}) and they possess no moving parts. Future grids will have large number of power-electronics-interfaced generations with highly distributed architecture as inverters
assume a more prominent role, and this will necessitate the
development of compatible models and computationally efficient analysis approaches to certify
stability.

Most commercial inverters on the market for residential and utility-scale applications are \emph{grid-following}. This means inverters inject currents while synchronized to the voltage at their terminals which is assumed to be set externally by the bulk grid. In effect, grid-following inverters act as voltage-following current sources. In recent years, there has been increased attention on \emph{grid-forming} inverter technology, whereby---much like conventional synchronous generators---terminal voltage and frequency are modulated as a function of real- and reactive-power injections. 
Indeed, while grid-forming technology may very well be the solution for grids with penetration levels approaching $100\%$, in the foreseeable future, it is likely that the bulk grid will see conventional synchronous generators co-exist alongside a large number of grid-following inverters. For instance, in Oahu, Hawaii, at least $800,000$ micro-inverters interconnect photovoltaic panels to the grid, producing as much power as the state’s largest conventional power plant, Konkar~\cite{PF:15}. This motivates the problem of stability analysis for large-scale networks composed dominantly of grid-following inverters that we examine in this work.

The inverter dynamical model that we investigate is composed of a current
controller, a power controller, a PLL, and an LCL filter. This is
prototypical and mirrors models published widely in the
literature~\cite{CP-MG-TG-RI:10,NP-MP-TCG:07,MP-TG:03,VP-SVD-SJ-FB-BBJ:17p,VP-SJ-BBJ-FB-SVD:16l,MR-JAM-JWK:14,MR-JAM-JWK:15,ET-DH:03}. We
propose a framework that leverages singular perturbation methods to obtain
an analytic, computationally light-weight condition for small-signal
stability assessment of three-phase distribution networks with
grid-following inverters. Our solution strategy yields an analytic
condition for stability that is agnostic to system size and clearly
highlights the role of the network topology and pertinent system parameters
on system stability. Due to the complexity of the involved dynamics, most
prior art on stability of inverter-based systems has typically focused on
simplified models that neglect inner control loops which underpin fast
dynamics~\cite{JS-DZ-RO-AMS-TS-JR:16} (see the discussions
in~\cite{SYC-PT:15}). Some exceptions
are~\cite{JLA-MB-JL-LM:11,CY-XZ-FL-HX-RC-HN:17}, where stability of
full-order inverter models are studied in grid-connected networks. However,
these studies are restricted to parallel networks of inverters and are not
applicable to networks with general topologies. In some cases, detailed
inverter models have been considered in general networks, but system
stability has only been studied for a single inverter or a small network of
inverters using numerical eigenvalue analysis. For instance, small-signal
stability is analyzed in the literature using eigenvalue analysis for a
single grid-following inverter under unintentional islanding
in~\cite{DV-VJ:20}, for the IEEE $37$-bus system with $7$ inverters
in~\cite{MR-JAM-JWK:14,MR-JAM-JWK:15}, for a radial network consisting of
$3$ inverters in~\cite{NP-MP-TCG:07}, and for a
single-machine-single-inverter network in~\cite{YL-BJ-VG-VP-SD:17}.
Understandably, while numerical eigenvalue analysis of the linearized
network is indeed a reasonable strategy, this approach comes with
significant drawbacks. First, the large size of the network combined with
the high dimensionality of the inverter model pose computational challenges
to analysis. Furthermore, studying small-signal stability by computing
eigenvalues of the linearized system does not reveal the role of critical
network attributes on system stability, insights, which if formalized
appropriately can facilitate analysis and design.


On a tangential note, it must be acknowledged
that there is a growing body of work on stability assessment of synchronous generators and grid-forming inverter systems, and this includes approaches that have applied model-order reduction using singular perturbation analysis. For instance, in~\cite{SC-DG-FD:17,DG-CA-FD:18}, a model-reduction approach based on singular perturbation is proposed to study stability and control of grid-forming inverters. In~\cite{LL-SVD:14}, singular perturbation is applied to obtain a hierarchy of reduced-order models for inverters in the grid-forming mode. In~\cite{MR-JAM-JWK:15}, a suitable time-scale decomposition for
a class of inverters is identified and an iterative scheme for model order reduction is proposed. We refer interested readers to~\cite{VRS-JOR-PVK:84} for a survey on singular perturbation methods and to~\cite{JS-DZ-RO-AMS-TS-JR:16} for a survey on
application of singular perturbation in stability and control
of inverters.


\paragraph*{Contributions} We make several contributions to the study of small-signal stability
of grid-following inverter networks. First, we show that adopting a
static model for grid-following inverters (as fixed sources
of active and reactive power) and neglecting fast dynamics induced by
the inverters' control loops may lead to erroneous conclusions regarding stability (see Example~\ref{exm:dynamic_vs_static}). This underscores the importance of acknowledging a
full-order model for stability analysis. We start by introducing a  model for grid-tied inverter networks that acknowledges line dynamics and where each inverter is modeled using 
  a $13$th-order model. Next, we
uncover a correspondence between the equilibrium points of the
 dynamics and the solutions of the algebraic
power-flow equations. The main contribution of this paper, i.e., an analytic sufficient condition for small-signal stability, is derived in the context of a dimensionless transcription of the involved models which leads to the identification of a physically insightful parametrization of the inverters. We show that certain assumptions on the range of parameters result in a time-scale decomposition of the system. Using singular perturbation analysis, we propose an analytic sufficient condition which guarantees small-signal stability over a given parametric regime. As a unique contribution, we emphasize that the dimensionless form of the network equations as well
as the regularity of the singular perturbation problem (i.e., existence of isolated quasi-steady state manifolds) are critical
steps in a rigorous time-scale analysis. Over this specified parametric regime, our analytic sufficient condition can also be interpreted as a lower bound on the stability threshold of the network and allows us to check system stability with minimal computational complexity. Furthermore, in the special case of resistive networks, our sufficient condition reduces to checking Hurwitzness of a Metzler matrix, something that can be implemented efficiently via linear programming (see~\cite{AR:15}).

In the literature, small-signal stability of systems is usually studied using eigenvalue analysis for the full-order models
  (e.g., see~\cite{NP-MP-TCG:07}) or for the reduced-order models
  (e.g., see the survey~\cite{JS-DZ-RO-AMS-TS-JR:16}). Compared to performing eigenvalue analysis for the full-order system, our proposed analytically driven sufficient condition reduces computational complexity by addressing the high dimensionality of the underlying dynamics (the inverter model we study has $13$ dynamical states) and it demarcates the role of the network (topology and constitution) and pertinent inverter dynamics (filter and controller parameters). Moreover, while many existing approaches in the literature only outline iterative schemes for model reduction (see~\cite{MR-JAM-JWK:15}), our analysis provides
  an explicit reduced-order model. Finally, we provide several numerical case studies that validate the analysis.


\paragraph*{Paper organization} In Section~\ref{sec:Model of a decoupled inverter}, we present the
dynamical model for a class of three-phase grid-following
inverters. In Section~\ref{sec:Equilibrium points of a grid-tied
  network of inverters}, we derive an equivalent dimensionless description
for a grid-tied network
of inverters and loads and we study the equilibrium points of the
system. In Section~\ref{sec:stability_analysis}, we provide a sufficient condition for existence of a locally exponentially stable equilibrium point. Finally, in Section~\ref{sec:numerical}, we illustrate some applications of the theoretical results in network design and stability assessment.

\paragraph*{Notation} \emph{Vectors and matrices.} We denote the set of real numbers by $\mathbb{R}$, the set of complex
numbers by $\mathbb{C}$, the set of complex numbers with
negative real part by ${\mathbb{C}}_{-}$, the set of binary $n$-tuples
by ${\mathbb{Z}}^n_2$, and the $n$-dimensional torus by $\mathbb{T}^n$. We define $\imagunit=\sqrt{-1}$. We
identify the complex plane $\mathbb{C}$ with the real plane $\mathbb{R}^2$. For
a complex number $v = v_1 + \imagunit v_2\in \mathbb{C}$, the norm of
$v$ is $|v| = \sqrt{v_1^2+v_2^2}$ and the argument of $v$,
$\mathrm{arg}(v)$, is the angle between $v$ and the positive imaginary
axis. We denote the identity matrix of dimension $n$ by $I_n$, the $n$-column vector
of zeros with $\vect{0}_n$, and the $n$-column vector of ones with $\vect{1}_n$. For a matrix $A=\{a_{ij}\}\in
\mathbb{C}^{n\times m}$, we denote the trace by $\mathrm{tr}(A)$, the determinant by $\det(A)$, and $\infty$-norm by $\|A\|_{\infty} = \max_{i}\sum_{j=1}^{n} |a_{ij}|$. 
For two real symmetric matrices $A,B\in \mathbb{R}^{n\times n}$
we write $A\succ B$ if $A-B$ is positive definite. A
  real square matrix $A\in \mathbb{R}^{n\times n}$ is Metzler if all its
  off-diagonal entries are non-negative. For two square
matrices $A\in \mathbb{R}^{n\times n}$ and $B\in \mathbb{R}^{m\times m}$, the
tensor product is denoted by $A\otimes B$. For a vector
$\mathrm{x}\in \mathbb{C}^n$, we denote $\diag(\mathrm{x})$ by
$[\mathrm{x}]$. 

\newcommand{\mJ}{\mathcal{J}}
\newcommand{\mH}{\mathcal{H}}
\newcommand{\mR}{\mathcal{R}}
\newcommand{\mD}{\mathcal{D}}

\noindent \emph{From $n$-complex variables to $2n$-real variables.}
For every complex $Z = X +\imagunit Y \in \mathbb{C}$,
the associated real variable in the real plane is denoted by $z = (x,
  y)^{\top} \in \mathbb{R}^2$. We will frequently use matrix $\mJ=\begin{pmatrix}0 & -1\\ 1&
0\end{pmatrix}$, matrix $\mH=\begin{pmatrix}0 & 1\\ 1& 0\end{pmatrix}$, and the rotation
matrix (parameterized by angle $\theta \in \mathbb{S}^1$) by
$\mR(\theta)=\begin{pmatrix}\cos(\theta) &
\sin(\theta)\\ -\sin(\theta) & \cos(\theta)\end{pmatrix}$. Let
$\mathrm{u}\in \mathbb{R}^{2}$. 
We define matrix-valued operators $\mD:\mathbb{R}^{2}\to \mathbb{R}^{2\times 2}$ and
$\mD':\mathbb{R}^{2}\to \mathbb{R}^{2\times 2}$ by 
\begin{align*}
\mD(\mathrm{u})=\begin{pmatrix}u_1
&u_2\\ u_2& -u_1\end{pmatrix}, \quad \mD'(\mathrm{u})=\begin{pmatrix}u_1
&u_2\\ -u_2& u_1\end{pmatrix}.
\end{align*}
All the above matrices and operators can be extended to the
$n$-dimensional complex and $2n$-dimensional real spaces using block
diagonal structure. For the brevity of notation, we denote the extended
$n$-dimensional complex ($2n$-dimensional real) matrix/operator with the
same symbol as its $1$-dimensional complex ($2$-dimensional real)
counterpart. Let $\mathbf{V}\in \mathbb{C}^n$ and
$\mathbf{v}\in \mathbb{R}^{2n}$ be the associated real vector, then we
define $\|\mathbf{v}\|_{\mathbb{C},\infty} = \|\mathbf{V}\|_{\infty}$.

\noindent \emph{Algebraic graph theory.} We denote an undirected weighted graph by a triple
$G=(\mathcal{N},\mathcal{E},A)$, where $\mathcal{N}=\{1,2,\ldots,n\}$ is the set of
nodes and $\mathcal{E}\subseteq \mathcal{N}\times \mathcal{N}$ is the set of edges. The
matrix $A=\{a_{ij}\}\in \mathbb{R}^{n\times n}$ is the weighted adjacency
matrix. For every node $i\in V$, 
the degree of the node $i$ is given by $d_i=\sum_{j=1}^{n} a_{ij}$. For a fixed
orientation on $G$, the incidence matrix of the graph $G$ is
denoted by $B\in \mathbb{R}^{n\times m}$. The Laplacian for the graph $G$
is defined by $L=D-A$, where $D=\mathrm{diag}(d_1,d_2,\ldots,d_n)$.

\noindent \emph{Power systems.}
We consider two different reference frames. The first is the so-called \textit{global} $\mathrm{DQ}$-frame, and it is a  rotating reference frame tied to the nominal grid frequency $\subscr{\omega}{nom}$. The second frame, which is usually
referred as the \textit{local} $\mathrm{dq}$-frame, is with reference to each inverter's terminal voltage vector. 
For a balanced three-phase signal $x:\mathbb{R}_{\ge  0} \to \mathbb{R}^3$, we denote the $\mathrm{dq}$-frame representation by
$x_{\mathrm{dq}}=(x_{\mathrm{d}},x_{\mathrm{q}})^{\top}$ and the $\mathrm{DQ}$-frame
representation by
$x_{\mathrm{DQ}}=(x_{\mathrm{D}},x_{\mathrm{Q}})^{\top}$. These  are related as follows:
$x_{\mathrm{dq}}=\mR(\delta) x_{\mathrm{DQ}}$, where $\delta$
is the angle between the $\mathrm{dq}$-frame and the
$\mathrm{DQ}$-frame. We assume that all the electrical
  signals in the network are balanced~\cite[Chapter 2]{HA-AM:17}. Therefore, the voltage of the
  grid is a three-phase AC signal given by the time-varying function $\subscr{v}{g}(t)=
  [\subscr{v}{ga}(t), \subscr{v}{gb}(t),
  \subscr{v}{gc}(t)]^{\top}$ and in the
  global $\mathrm{DQ}$-frame, it is represented as $
    \subscr{v}{gDQ} = [0 ,\subscr{V}{g}]^{\top}$,
  where $\subscr{V}{g}$ is the amplitude of the grid voltage.

\section{Model of Individual Inverter}\label{sec:Model of a decoupled
  inverter} We briefly overview the dynamics of the type of grid-following
$3$-phase inverters examined in this work. For a more detailed description
of the model, see~\cite{VP-SJ-BBJ-FB-SVD:16l,MR-JAM-JWK:14}. The model
captures all relevant AC-side dynamics, and is composed of a:
i)~phase-locked loop (PLL), ii)~power controller, iii)~current controller,
and iv)~$LC$ output filter. An illustrative block diagram is given in
Fig.~\ref{fig:inverter_3phase}.
\begin{figure*}[!t]
\centering
\includegraphics[width=0.8\textwidth]{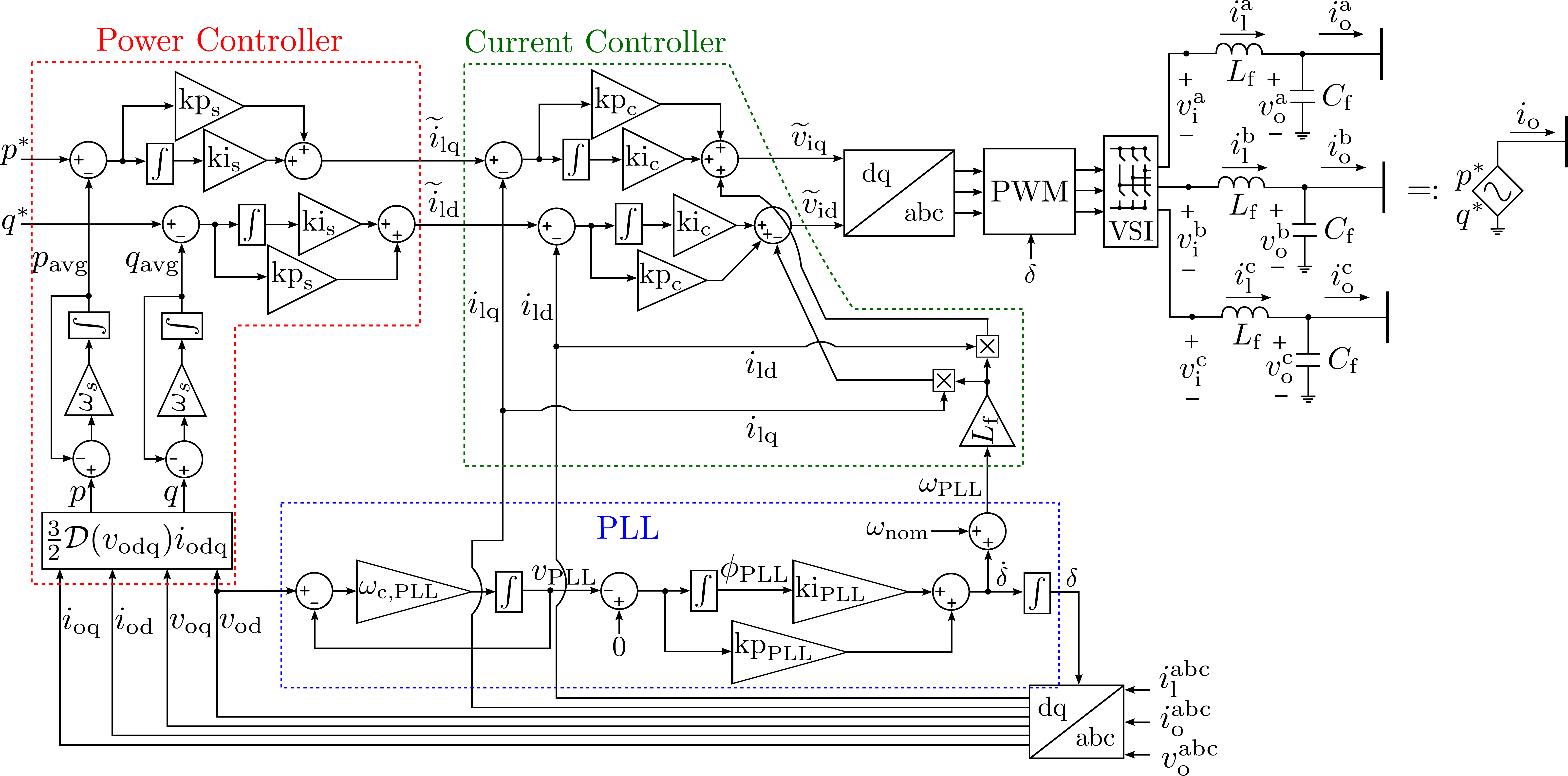}
\caption{Block diagram capturing the dynamics of a three phase grid-tied inverter and shorthand representation for the per-phase equivalent circuit. Model includes dynamics arising from the phase locked loop~\eqref{eq:PLL}, power controller~\eqref{eq:PC}, current controller~\eqref{eq:CC}, and $LC$ filter~\eqref{eq:LC}~\cite{VP-BBJ-SJ-FB-SVD:18h}.}
\label{fig:inverter_3phase}
\end{figure*}
The PLL consists of a low-pass filter with cut-off frequency $\omega_{c,\mathrm{PLL}}$ and a PI controller with gains $\mathrm{kp}_\mathrm{PLL}$ and $\mathrm{ki}_\mathrm{PLL}$. The PLL dynamics are 
\begin{subequations}\label{eq:PLL}
\begin{align}
\dot{v}_{\mathrm{PLL}} &= \omega_{c,\mathrm{PLL}} ( v_{\mathrm{od}} -
                         v_\mathrm{PLL} ), \label{eq:PLL1} \\
\dot{\phi}_{\mathrm{PLL}} & = - v_\mathrm{PLL}, \label{eq:PLL2} \\
\dot{\delta} &= - \mathrm{kp}_\mathrm{PLL} v_\mathrm{PLL} + \mathrm{ki}_\mathrm{PLL} \phi_\mathrm{PLL}, \label{eq:PLL3}
\end{align}
\end{subequations}
where $v_\mathrm{PLL}$ and $\phi_\mathrm{PLL}$ denote states of the
low-pass filter and PI controller, respectively, $\dot{\delta}$ is the
output of the PI controller, and $\subscr{v}{od}$ is the $d$-component
of the output voltage of the inverter. The frequency of the PLL loop
is defined by $\omega_{\mathrm{PLL}}=\omega_{\mathrm{nom}}+\dot
\delta$. The power controller consists of two low-pass filters with cut-off frequency $\subscr{\omega}{s}$ and two PI controllers with gains $\subscr{\mathrm{kp}}{s}$ and $\subscr{\mathrm{ki}}{s}$. The pertinent dynamics are given by:
\begin{subequations} \label{eq:PC}
\begin{align}
\hspace{-0.3cm}\dot{s}_{\mathrm{avg}} &= \subscr{\omega}{s}(s - s_{\mathrm{avg}}), \label{eq:PC1}\\ 
\hspace{-0.3cm}\tilde{i}_{l\mathrm{dq}}
                       &=\subscr{\mathrm{kp}}{s}\mH(s^{\mathrm{ref}}-s_{\mathrm{avg}})+\subscr{\mathrm{ki}}{s}\int\mH(s^{\mathrm{ref}}-s_{\mathrm{avg}})
                         dt, \label{eq:PC2}
\end{align}
\end{subequations}
where $s_\mathrm{avg} = [p_\mathrm{avg}, q_\mathrm{avg}]^\top$
collects the states of the low-pass filters,
$\tilde{i}_{l\mathrm{dq}}$ capture the outputs of the PI controllers
(these are the references for the current controller), $s^{\mathrm{ref}}=(p^{\mathrm{ref}},
q^{\mathrm{ref}})^{\top}$ collects the active- and reactive-power references, and $s=(p,q)^{\top}$ collects instantaneous
active- and reactive-power outputs (measured at the point of common
coupling):
\begin{equation*}
s=\tfrac{3}{2}\begin{pmatrix}
v_{\mathrm{od}} i_{\mathrm{od}} + v_{\mathrm{oq}}
i_{\mathrm{oq}}\\
v_{\mathrm{oq}} i_{\mathrm{od}}
- v_{\mathrm{od}} i_{\mathrm{oq}}
\end{pmatrix} = \tfrac{3}{2}\mD(v_{\mathrm{odq}})i_{\mathrm{odq}}. 
\end{equation*}
The current controller consists of two PI controllers with gains
$\subscr{\mathrm{kp}}{c}$ and $\subscr{\mathrm{ki}}{c}$, with outputs to be the references for the inverter voltage at the switching terminals $v_{\mathrm{idq}}$:
\begin{align}\label{eq:CC}
\begin{split}
\tilde{v}_{\mathrm{idq}}&=
\subscr{\mathrm{kp}}{c}\left(\tilde{i}_{\mathrm{ldq}}-i_{\mathrm{ldq}}\right)+\subscr{\mathrm{ki}}{c}\int\left(\tilde{i}_{\mathrm{ldq}}-i_{\mathrm{ldq}}\right)
dt\\
&+\omega_{\mathrm{PLL}}\subscr{L}{f}(\mJ \subscr{i}{ldq}). 
\end{split}
\end{align}
Since the switching period is typically much shorter than the filter
and controller time constants, we assume that
$v_{\mathrm{idq}}=\tilde{v}_{\mathrm{idq}}$. The dynamics of the $LC$ filter are given by
\begin{subequations} \label{eq:LC}
\begin{align}
\dot{i}_{\mathrm{ldq}} &= \frac{1}{\subscr{L}{f}} ( 
                         v_{\mathrm{idq}} - v_{\mathrm{odq}} ) -
                         \omega_\mathrm{PLL} (\mJ i_{\mathrm{ldq}}), \label{eq:LC1} \\
\dot{v}_{\mathrm{odq}} & =\frac{1}{\subscr{C}{f}} ( i_{\mathrm{ldq}} - i_{\mathrm{odq}} ) - \omega_\mathrm{PLL} (\mJ v_{\mathrm{odq}}). \label{eq:LC2}
\end{align}
\end{subequations}
Finally, we introduce two new variables $\subscr{\phi}{s} := \int (s^{\mathrm{ref}}-\subscr{s}{avg}) dt$, and $\subscr{\gamma}{dq} := \int (\subscr{\tilde{i}}{ldq}
-\subscr{i}{ldq}) dt$, that will aid in exposition. The inner- and outer-loop control architecture examined here is
ubiquitous, see, e.g.,~\cite{CP-MG-TG-RI:10,NP-MP-TCG:07,MP-TG:03,VP-SVD-SJ-FB-BBJ:17p,VP-SJ-BBJ-FB-SVD:16l,MR-JAM-JWK:14,MR-JAM-JWK:15,ET-DH:03}, where similar models are utilized. 

Now, we write the dynamical system of the inverters in the global $\mathrm{DQ}$-frame. To this end, we introduce:
\begin{equation*}
\gamma_{\mathrm{DQ}} = \mR(-\delta)\gamma_{\mathrm{dq}}, \,\,
i_{\mathrm{lDQ}}=\mR(-\delta) {i}_{\mathrm{ldq}},\,\, {v}_{\mathrm{oDQ}}=\mR(-\delta) {v}_{\mathrm{odq}}.
\end{equation*}
For vector $y$ with time-varying entries, the time derivatives in the global $\mathrm{DQ}$-frame and local $\mathrm{dq}$-frame are
related by
\begin{equation*} 
\subscr{\dot{y}}{DQ} - \mJ \mR(-\delta) \dot{\delta}\subscr{y}{DQ} = \mR(-\delta)\subscr{\dot{y}}{dq}.
\end{equation*}
Leveraging this identity, the dynamical model for the grid-following inverter in the $\mathrm{DQ}$-frame can be expressed as:
\begin{align}
\begin{split}\label{eq:inverter_dynamics}
\dot{y}=f(y)+g(y)\subscr{i}{oDQ}+Cs^{\mathrm{ref}},
\end{split}
\end{align}
where $y=(v_\mathrm{PLL},  \delta, \subscr{\phi}{s},  \subscr{s}{avg},
\gamma_{\mathrm{DQ}}, {i}_{\mathrm{lDQ}},
{v}_{\mathrm{oDQ}})^{\top}\in \mathbb{R}^{13}$ captures states of the inverter
in the global $DQ$ reference frame, $s^{\mathrm{ref}}=(p^{\mathrm{ref}},q^{\mathrm{ref}})^{\top}\in \mathbb{R}^2$
captures the references for active and reactive power, $f:\mathbb{R}^{13}\to \mathbb{R}^{13}$ is the drift vector field, and $g:\mathbb{R}^{13}\to \mathbb{R}^{13\times 2}$, and $C\in \mathbb{R}^{13\times 2}$ are control vector fields. The mappings $f,g$ and  the matrix $C$ are obtained from the dynamics governing the current controller, the power controller, the PLL, and the $LC$ filter outlined previously.

\section{Model for Grid-tied Network of Inverters}\label{sec:Equilibrium points of a grid-tied network of inverters}
In this section, we derive the dynamical system model governing the
grid-tied network of inverters and loads and study the equilibrium
points of the system. We model the network using an undirected,
connected, complex-weighted graph $G$ with node set (buses)
$\mc N$, edge set (branches) $\mc E \subseteq \mc N\times \mc N$, and
the symmetric matrix-valued edge weights (admittances)
$a_{kj}=a_{jk}=(R_{kj}I_2 + \subscr{\omega}{nom}
L_{kj}\mJ)^{-1}$, where $R_{kj}$ is the resistance and $L_{kj}$ is the inductance of the line $(k,j)$, for every $(k,j)\in \mathcal{E}$. Suppose $B$
is the incidence matrix of $G$. There are
three types of nodes in the network: we have one \emph{grid
    bus} with voltage $\subscr{v}{gDQ} =
  [0,\subscr{V}{g}]^{\top}$ denoted by $0$.
We have $n\ge 1$ \emph{inverter buses} collected in the set
$\subscr{\mc N}{I}$, and $\ell$ \emph{load buses} collected in the set
$\subscr{\mc N}{L}$. Without loss of generality, we assume that
$\subscr{\mc N}{L} = \{1,\ldots,\ell\}$ and $ \subscr{\mc
  N}{I}=\{\ell+1,\ldots,n+\ell\}$ such that $\mc N =\{0\}\cup \subscr{\mc
  N}{L}\cup \subscr{\mc N}{I}$. Therefore $|\mc N|=1+\ell+n$ and we assume that $|\mc E|=m$. Associated to the matrix-weighted
graph, $G$, we define the nodal admittance matrix by $Y = (B\otimes
I_2)\mathcal{A} (B\otimes I_2)^{\top} \in \mathbb{R}^{2(1+\ell+n)\times 2(1+\ell+n)}$, where $\mathcal{A}\in \mathbb{R}^{2m\times 2m}$ is given by $\mathcal{A} = \mathrm{blkd}(a_{jk})$. The partition $\mc N = \{0\}\cup
\subscr{\mc N}{L} \cup \subscr{\mc N}{I}$ induces the following
decomposition of incidence matrix $B$: $B^{\top}=\begin{pmatrix} \subscr{B^{\top}}{0} &\subscr{B^{\top}}{L} & \subscr{B^{\top}}{I}
\end{pmatrix}$, where $\subscr{B}{0}\in \mathbb{R}^{1\times m}$, $\subscr{B}{L}\in
\mathbb{R}^{\ell\times m}$, and $\subscr{B}{I}\in
\mathbb{R}^{n\times m}$, and the following partition for the admittance matrix $Y$: 
\begin{align*}
Y = \colvec[1]{
\subscr{Y}{00} & \subscr{Y}{0L} & \subscr{Y}{0I}\\
\subscr{Y}{L0} & \subscr{Y}{LL} & \subscr{Y}{LI}\\
\subscr{Y}{I0} & \subscr{Y}{IL} & \subscr{Y}{II}}.
\end{align*}
We also establish the following convention: for a given variable (parameter) $y$ corresponding to the inverter, we
define vector $\mathbf{y}= (y_1^{\top}, \ldots, y_n^{\top})^{\top}$, where
$y_k$ is the associated variable (parameter) for the $k$th
inverter.

\paragraph*{Inverter model} Using~\eqref{eq:inverter_dynamics}, the governing dynamics for \emph{all} inverters
in the network can be expressed as:
\begin{align}
\begin{split}\label{eq:inverters_dynamics2}
\dot{\mathbf{y}} = F(\mathbf{y})+G(\mathbf{y})\mathrm{i}_{\mathrm{oDQ}}+C\mathbf{s}^{\mathrm{ref}},
\end{split}
\end{align}
where, in $\mathbf{y} = (y_1^{\top}, \ldots, y_n^{\top})^{\top}\in \mathbb{R}^{13n}$, $y_k$ captures all the dynamic states for the $k$th inverter,
$F(\mathbf{y})=(f^{\top}_1(y_1) , \ldots,  f^{\top}_n(y_n))^{\top}$,
$G(\mathbf{y})=\diag\left(g_{1}(y_1), \ldots,
  g_n(y_n)\right)$, and $C=\diag\left(C_1,\ldots,C_n\right)$, where
$f_k$ is the drift vector field and $g_k$ and
$C_k$ are control vector fields of inverter $k$. 

\paragraph*{Load and line models} Let $v_k\in \mathbb{R}^2$ be the $k$th load voltage (in the $\mathrm{DQ}$-frame)
and $i_k\in \mathbb{R}^2$ be the current demand (in the $\mathrm{DQ}$-frame) of the $k$th load. We collect the nodal voltages and current demands for the loads in $\subscr{\mathbf{v}}{L} = (v_{1}^{\top},\ldots,
v_{\ell}^{\top})^{\top}\in \mathbb{R}^{2\ell}$ and
$\subscr{\mathbf{i}}{L} = (i_{1}^{\top},\ldots, i_{\ell}^{\top})^{\top}\in \mathbb{R}^{2\ell}$, respectively. We assume loads are purely resistive; suppose $\subscr{\mathbf{R}}{L} \in \mathbb{R}^{\ell}$ is the vector of load resistances, then the loads can be described by
\begin{align}\label{eq:load_dynamics2}
\subscr{\mathbf{v}}{L} = \big(-[\subscr{\mathbf{R}}{L}]\otimes I_2\big)\subscr{\mathbf{i}}{L}.
\end{align}
Suppose that the vector of line resistances and line inductances are denoted by $\mathbf{R}_{\mc E}\in
\mathbb{R}^{m}$ and $\mathbf{L}_{\mc
  E}\in \mathbb{R}^{m}$, respectively; the nodal current injections by $\mathbf{i} =
  (\subscr{\mathrm{i}}{g},
  -\subscr{\mathbf{i}}{L},\subscr{\mathbf{i}}{oDQ})^{\top}$ and nodal
  voltages by $\mathbf{v} = (\subscr{v}{gDQ}, \subscr{\mathbf{\mathbf{v}}}{L},
\subscr{\mathrm{\mathbf{v}}}{oDQ})^{\top}$. The governing dynamics for the transmission
lines are~\cite[Equation 4.10]{JS-DZ-RO-AMS-TS-JR:16}: 
\begin{multline}\label{eq:line_dynamics}
([\mathbf{L}_{\mc E}]\otimes I_2)\subscr{\dot{\xi}}{DQ}  =  \big(-[\mathbf{R}_{\mc E}]\otimes I_2 -
  \subscr{\omega}{nom}[\mathbf{L}_{\mc
    E}]\otimes \mJ\big)\subscr{\xi}{DQ} \\ 
    + (B^{\top}\otimes I_2)\mathbf{v},
\end{multline}
where $\subscr{\xi}{DQ}\in \mathbb{R}^{2m}$ is the vector of
current flows in the lines. Thus we have $\mathbf{i}= (B\otimes I_2)
\subscr{\xi}{DQ}$. 

\subsection{Network Model and Dimensionless Transcription}
From~\eqref{eq:inverters_dynamics2}--\eqref{eq:line_dynamics}, the \emph{grid-tied inverter-network dynamics} are:
\begin{align}\label{eq:network_dynamics}
  \dot{\mathbf{y}}&=F(\mathbf{y})+G(\mathbf{y})(\subscr{B}{I}\otimes I_2)\subscr{\xi}{DQ}+C\mathbf{s}^{\mathrm{ref}},\nonumber\\
  ([\mathbf{L}_{\mc E}]\otimes I_2)\subscr{\dot{\xi}}{DQ}  &=  \big(-[\mathbf{R}_{\mc E}]\otimes I_2 -
  \subscr{\omega}{nom}[\mathbf{L}_{\mc
    E}]\otimes \mJ\big)\subscr{\xi}{DQ} \nonumber\\
    &\phantom{=} + (B^{\top}\otimes I_2)\mathbf{v},
\end{align}
where $\mathbf{v} = (\subscr{v}{gDQ}, \subscr{\mathbf{\mathbf{v}}}{L},
\subscr{\mathrm{\mathbf{v}}}{oDQ})^{\top}$ and $\subscr{\mathbf{v}}{L} =
\big(-\subscr{B}{L}[\subscr{\mathbf{R}}{L}]\otimes I_2\big)\subscr{\xi}{DQ}$.
We transcribe the differential
equations~\eqref{eq:network_dynamics} in a dimensionless
format. We assume that $\subscr{s}{nom}$ is the nominal
  power generation/consumption in the network. For each inverter, we introduce the
following dimensionless variables:
\begin{align*}
\subscr{\widehat{v}}{PLL} &:=
                              \frac{\subscr{v}{PLL}}{\subscr{V}{g}},\quad \subscr{\widehat{\phi}}{PLL} := \frac{\subscr{\mathrm{ki}}{PLL}}{\subscr{V}{g}\subscr{\mathrm{kp}}{PLL}}\subscr{\phi}{PLL},\quad
\subscr{\widehat{s}}{avg} := \frac{\subscr{s}{avg}}{\subscr{s}{nom}}, \\
\subscr{\widehat{\phi}}{s} &:=
  \frac{\subscr{V}{g}\subscr{\mathrm{ki}}{s}\subscr{\phi}{s}}{\subscr{s}{nom}},\quad \widehat{s}^{\mathrm{ref}} := \frac{s^{\mathrm{ref}}}{\subscr{s}{nom}},\quad  
\subscr{\widehat{\gamma}}{DQ}  := \frac{\subscr{\mathrm{ki}}{c}\subscr{\gamma}{DQ}}{\subscr{V}{g}}, \\
\subscr{\widehat{i}}{lDQ}  & :=
                             \frac{\subscr{V}{g}\subscr{i}{lDQ}}{\subscr{s}{nom}}.
\end{align*}
For the network, we introduce the dimensionless parameters
\begin{align*}
  \subscr{\widehat{\xi}}{DQ} := \subscr{V}{g}\subscr{s^{-1}}{nom}\subscr{\xi}{DQ},
                     \quad \widehat{\mathbf{v}} :=
                               \subscr{V^{-1}}{g}\mathbf{v},
                               \quad \widehat{\mathbf{i}} := \subscr{V}{g}\subscr{s^{-1}}{nom}\mathbf{i}.
  \end{align*}
We also isolate time-constants of different sub-systems:
\begin{itemize}[noitemsep,nolistsep,itemindent=0pt,wide=0pt]
\item  for the PLL low-pass filter: $\subscr{\tau}{PLL} =
  \subscr{\omega^{-1}}{c,PLL}$ and $\subscr{\tau'}{PLL} =
  (\subscr{V}{g}\subscr{\mathrm{kp}}{PLL})^{-1}$;
  
\item for the PLL PI controller:
  $\subscr{\mathrm{T}}{PLL} =
  \frac{\subscr{\mathrm{kp}}{PLL}}{\subscr{\mathrm{ki}}{PLL}}$;
  
\item for the low-pass filter 
of the power controller:
  $\subscr{\tau}{s} = \subscr{\omega^{-1}}{s}$;
\item for tracking in the power controller: $\subscr{\tau'}{s} = (\subscr{V}{g}\subscr{\mathrm{ki}}{s})^{-1}$;
\item for the PI controller in the power controller: $\subscr{\mathrm{T}}{s} = \frac{\subscr{\mathrm{kp}}{s}}{\subscr{\mathrm{ki}}{s}}$;
\item for the current controller: $\subscr{\tau}{c} =
  \subscr{V}{g}^2(\subscr{\mathrm{ki}}{c}\subscr{s}{nom})^{-1}$;
\item for the PI controller in the current controller:
  $\subscr{\mathrm{T}}{c} =
  \frac{\subscr{\mathrm{kp}}{c}}{\subscr{\mathrm{ki}}{c}}$;
\item for the LC filter:
    $\subscr{\tau}{LC}
    =\frac{\subscr{\mathrm{L}}{f}}{\sqrt{\subscr{\omega^{-2}}{nom}\mathrm{C}^{-2}_{\mathrm{f}}+
        \subscr{\omega^2}{nom}\subscr{\mathrm{L}^2}{f}}}$ and
    $\subscr{\tau'}{LC}=\frac{\subscr{\mathrm{C}}{f}}{\sqrt{\subscr{\omega^{2}}{nom}\mathrm{C}^{2}_{\mathrm{f}}+
        \subscr{\omega^{-2}}{nom}\subscr{\mathrm{L}^{-2}}{f}}}$ and
    $\subscr{\tau''}{LC} = 
  \subscr{\mathrm{C}}{f}\subscr{\omega}{nom}\subscr{s^{-1}}{nom}\subscr{V}{g}^{2}$;
  \item for line $e\in \mathcal{E}$ in the network: $\subscr{\tau}{e}=\frac{L_e}{\sqrt{R^2_e+\subscr{\omega^2}{nom}L_e^2}}$ and $\subscr{\tau'}{e}=\subscr{s}{nom}\subscr{V}{g}^{-2}\sqrt{R^2_e+\subscr{\omega^2}{nom}L_e^2}$.
\end{itemize}
With these preliminaries in place, the \emph{dimensionless grid-tied inverter-network dynamics} can be expressed as:
\begin{subequations}\label{eq:network_dynamics_dim}
\begin{align}
&\hspace{-0.2cm}\dot{\widehat{\mathbf{v}}}_{\mathrm{PLL}} =[\subscr{\boldsymbol{\tau}}{PLL}]^{-1}(
\widehat{\mathbf{v}}_{\mathrm{od}} - \widehat{\mathbf{v}}_\mathrm{PLL}
), \label{eq1}\\
&\hspace{-0.2cm}\dot{\widehat{\boldsymbol{\phi}}}_\mathrm{PLL} = -[\subscr{\mathbf{T}}{PLL}]^{-1}\widehat{\mathbf{v}}_\mathrm{PLL}, \label{eq2}\\
&\hspace{-0.2cm}\dot{\boldsymbol{\delta}} = 
[\subscr{\boldsymbol{\tau}'}{PLL}]^{-1}\left( 
  \widehat{\boldsymbol{\phi}}_\mathrm{PLL} - \widehat{\mathbf{v}}_\mathrm{PLL} \right), \label{eq3}\\
&\hspace{-0.2cm}\dot{\widehat{\mathbf{s}}}_{\mathrm{avg}} =
[\subscr{\boldsymbol{\tau}}{s}\otimes I_2]^{-1}(\widehat{\mathbf{s}} - \widehat{\mathbf{s}}_{\mathrm{avg}}), \label{eq4}\\ 
  &\hspace{-0.2cm}\dot{\subscr{\widehat{\boldsymbol{\phi}}}{s}}   =
[\subscr{\boldsymbol{\tau}'}{s}\otimes I_2]^{-1}(\widehat{\mathbf{s}}^{\mathrm{ref}} -
                                   \widehat{\mathbf{s}}_{\mathrm{avg}}), \label{eq5}\\
&\hspace{-0.2cm}\subscr{\dot{\widehat{\boldsymbol{\gamma}}}}{DQ} =
[\subscr{\boldsymbol{\tau}}{c}\otimes I_2]^{-1}
                                                 \left(\subscr{\tilde{\mathbf{i}}}{lDQ}-\subscr{\widehat{\mathbf{i}}}{lDQ}\right)
                                                + \mJ[\dot{\boldsymbol{\delta}}]\subscr{\widehat{\boldsymbol{\gamma}}}{DQ} , \label{eq6}\\ 
&\hspace{-0.2cm}\dot{\widehat{\mathbf{i}}}_{\mathrm{lDQ}}
                                                                                                                                                   =
([\subscr{\boldsymbol{\tau}}{LC}]^{-1}[\mathcal{X}]\otimes I_2)\left( \widehat{\mathbf{v}}_{\mathrm{lDQ}}
                        -
                                   \widehat{\mathbf{v}}_{\mathrm{oDQ}}
                                   \right) +
                                                                                                                                                    \mJ [\dot{\boldsymbol{\delta}}] \widehat{\mathbf{i}}_{\mathrm{lDQ}}, \label{eq7}\\ 
&\hspace{-0.2cm}\dot{\widehat{\mathbf{v}}}_{\mathrm{oDQ}} 
=([\subscr{\boldsymbol{\tau}'}{LC}][\mathcal{X}]\otimes I_2)^{-1}\big(
                                   \widehat{\mathbf{i}}_{\mathrm{lDQ}} -
                                   \widehat{\mathbf{i}}_{\mathrm{oDQ}} \nonumber
  \\ &\hspace{4cm} - [\subscr{\boldsymbol{\tau}''}{LC}\otimes I_2]\mJ\widehat{\mathbf{v}}_{\mathrm{oDQ}}\big),\label{eq8}\\
&\hspace{-0.2cm}\subscr{\dot{\widehat{\xi}}}{DQ}  =
                                                                                                                                                                                           \big([\boldsymbol{\tau}_{\mathcal{E}}][\boldsymbol{\tau}'_{\mathcal{E}}]\otimes
                                                                                                                                                                                           I_2\big) ^{-1}\big((B^{\top}\otimes
                                                      I_2)\mathbf{\widehat{v}}
                                                                                                                                                                                      -
                                                                                                                                                                                      \mathcal{Z}\subscr{\widehat{\xi}}{DQ}\big).\label{eq9}                                                                           
\end{align}
\end{subequations}
Above,  
\begin{align*}
\tilde{\mathbf{i}}_{\mathrm{lDQ}} &= [\subscr{\mathbf{T}}{s}\otimes I_2]\mR(-\boldsymbol{\delta}) \mH\subscr{\dot{\widehat{\boldsymbol{\phi}}}}{s}
                                                   +
                                                  \mR(-\boldsymbol{\delta}) \mH\subscr{\widehat{\boldsymbol{\phi}}}{s},\\
\widehat{\mathbf{v}}_{\mathrm{lDQ}} &= [\subscr{\mathbf{T}}{c}\otimes I_2]\subscr{\dot{\widehat{\boldsymbol{\gamma}}}}{DQ}
                                   +
                                      \subscr{\widehat{\boldsymbol{\gamma}}}{DQ} ,\\
  \widehat{\mathbf{i}} & = (B\otimes I_2)\subscr{\widehat{\xi}}{DQ},\\
  \mathcal{Z} & = [\subscr{\omega}{nom}\mathbf{L}_{\mc
                E}]^{-1}[\mathbf{R}_{\mc E}]\otimes I_2 + (I_m\otimes
                \mJ),\\
  \mathcal{X} & = \subscr{s^{-1}}{nom}\subscr{V^2}{g}[\subscr{\omega^{-2}}{nom}\mathbf{C}^{-2}_{\mathrm{f}}+
        \subscr{\omega^2}{nom}\subscr{\mathbf{L}^2}{f}]^{\frac{1}{2}},\\
  \mathbf{\widehat{v}} &= (\subscr{\widehat{v}}{gDQ}, \subscr{\mathbf{\widehat{v}}}{L},
\subscr{\widehat{\mathbf{v}}}{oDQ})^{\top}.
\end{align*}
The dimensionless grid-tied inverter network dynamics~\eqref{eq1}--\eqref{eq9} is $(13n+2m)$-dimensional. Our first goal is to find the equilibrium points of the dynamical systems~\eqref{eq1}--\eqref{eq9}.
\subsection{Equilibrium points of the Dimensionless Grid-tied
  Inverter-network Dynamics}

 We start by introducing some notation. Let $Y\in \mathbb{R}^{(2n+2)}$ be the admittance
matrix of the network. Then 
\begin{align} \label{eq:defins}
\begin{split}
\subscr{Y}{red} &:= \subscr{Y}{II} -
\subscr{Y}{IL}\left(\subscr{Y}{LL}+[\subscr{\mathbf{R}}{L}]^{-1}\otimes
I_2\right)^{-1}
\subscr{Y}{LI}, \\
\subscr{Y}{C,red} &:= \subscr{Y}{red} +
\subscr{\omega}{nom}\left([\subscr{\mathbf{C}}{f}]\otimes J\right),\\ 
\subscr{Y}{g} &:= \subscr{Y}{I0} - \subscr{Y}{IL}\left(\subscr{Y}{LL}
  +[\subscr{\mathbf{R}}{L}]^{-1}\otimes I_2 \right)^{-1}\subscr{Y}{L0}
, \\
\mathbf{w} &:= -\subscr{Y^{-1}}{red}\subscr{Y}{0g}\subscr{v}{gDQ},
\end{split}
\end{align}
and the dimensionless parameters:
\begin{align*}
  \widehat{Y}_{(\cdot)} &:= \subscr{V}{g}^2\subscr{\mathrm{s}^{-1}}{nom} {Y}_{(\cdot)},\qquad
  \widehat{\mathbf{w}} :=
  -\subscr{V}{g}\subscr{\mathrm{s}^{-1}}{nom}\mathbf{w},\\
  \mathcal{Z}_{\mathrm{L}} &= \mathcal{Z} +
\big([\subscr{\omega}{nom}\mathbf{L}_{\mathcal{E}}]^{-1}\subscr{B^{\top}}{L}[\subscr{\mathbf{R}}{L}]\subscr{B}{L}\big)\otimes I_2.
\end{align*}
We show that the equilibrium points of the dimensionless grid-tied
inverter network dynamics~\eqref{eq:network_dynamics_dim} are in
correspondence with the solutions of the following power-flow equations:
\begin{align}
\widehat{\mathbf{s}}^{\mathrm{ref}}& = \tfrac{3}{2}\mD(\subscr{\widehat{\mathbf{v}}}{oDQ})\subscr{\widehat{\mathbf{i}}}{oDQ}, \label{eq:power_flow1}\\
\subscr{\widehat{\mathbf{i}}}{oDQ} & = \subscr{\widehat{Y}}{red}\subscr{\widehat{\mathbf{v}}}{oDQ} + \subscr{\widehat{Y}}{g}\subscr{\widehat{v}}{gDQ}. \label{eq:power_flow2}
\end{align}
\begin{lemma}\label{lem:power_flow_equilibrium_correspondence}
For a given reference-power injection $\widehat{s}^{\mathrm{ref}}$ to the
inverters, the following statements are equivalent:
\begin{enumerate}
\item\label{p1:power_flow}
  $(\subscr{\widehat{\mathbf{v}}^{\mathrm{ref}}}{oDQ},\subscr{\widehat{\mathbf{i}}^{\mathrm{ref}}}{oDQ})^{\top}\in
  \mathbb{R}^{4n}$ is a solution for the power-flow equations~\eqref{eq:power_flow1} and~
  \eqref{eq:power_flow2};
\item\label{p2:equilibrium_point} for every $\alpha=(\alpha_1,\ldots,\alpha_n)^{\top}\in
  \mathbb{Z}^n_2$, $\widehat{\mathbf{x}}^{\mathrm{ref}}_{\alpha}\in \mathbb{R}^{13n+2m}$ is an
  equilibrium point of the dimensionless grid-tied inverter-network dynamics~\eqref{eq:network_dynamics_dim}
  given by 
\begin{align*}
&\hspace{-1cm} \widehat{\mathbf{x}}^{\mathrm{ref}}_{\alpha} = (
\vect{0}_{2n},
\boldsymbol{\delta}^{\mathrm{ref}}+\alpha\pi,
(-1)^{\alpha}\subscr{\widehat{\boldsymbol{\phi}}^{\mathrm{ref}}}{s},
\widehat{\mathbf{s}}^{\mathrm{ref}},
\widehat{\boldsymbol{\gamma}}^{\mathrm{ref}}_{\mathrm{DQ}},
\widehat{\mathbf{i}}^{\mathrm{ref}}_{\mathrm{lDQ}},
  \widehat{\mathbf{v}}^{\mathrm{ref}}_{\mathrm{oDQ}}, 
  \subscr{\widehat{\xi}^{\mathrm{ref}}}{DQ})^{\top}
\end{align*}
with 
\begin{align*}
&\boldsymbol{\delta}^{\mathrm{ref}}=-\mathrm{arg}(\widehat{\mathbf{v}}^{\mathrm{ref}}_{\mathrm{oDQ}}), 
\hspace{0.5cm}\widehat{\mathbf{i}}^{\mathrm{ref}}_{\mathrm{lDQ}}=[\subscr{\boldsymbol{\tau}''}{LC}\otimes
                                                                                                                        I_2]\mJ\widehat{\mathbf{v}}^{\mathrm{ref}}_{\mathrm{oDQ}}+\widehat{\mathbf{i}}^{\mathrm{ref}}_{\mathrm{oDQ}},\\
&\subscr{\widehat{\boldsymbol{\phi}}^{\mathrm{ref}}}{s}=\mH\mR(\boldsymbol{\delta}^{\mathrm{ref}})\widehat{\mathbf{i}}^{\mathrm{ref}}_{\mathrm{lDQ}}
                                             ,\hspace{0.3cm}\widehat{\boldsymbol{\gamma}}^{\mathrm{ref}}_{\mathrm{DQ}}=\widehat{\mathbf{v}}^{\mathrm{ref}}_{\mathrm{oDQ}},\\
&\subscr{\widehat{\xi}^{\mathrm{ref}}}{DQ} =
                                                                                                           \subscr{\mathcal{Z}^{-1}}{L}\big((\subscr{B^{\top}}{I}\otimes I_2)
\widehat{\mathbf{v}}^{\mathrm{ref}}_{\mathrm{oDQ}} + (\subscr{B^{\top}}{0}\otimes I_2)
\widehat{v}_{\mathrm{gDQ}}\big).
\end{align*}
\end{enumerate}
\end{lemma}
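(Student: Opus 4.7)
\medskip

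\noindent\textbf{Proof plan.} The strategy is to set every time derivative in the dimensionless dynamics \eqref{eq:network_dynamics_dim} to zero and to unravel the resulting algebraic cascade from the innermost inverter states outward to the network. Both implications then follow essentially simultaneously: \eqref{p1:power_flow}$\Rightarrow$\eqref{p2:equilibrium_point} by substituting the candidate $\widehat{\mathbf{x}}^{\mathrm{ref}}_{\alpha}$ into each line of \eqref{eq1}--\eqref{eq9} and checking it vanishes, and \eqref{p2:equilibrium_point}$\Rightarrow$\eqref{p1:power_flow} by reading the same chain in reverse.

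I would start with the PLL block. Setting \eqref{eq2} to zero forces $\widehat{\mathbf{v}}_{\mathrm{PLL}}=\vect{0}$, then \eqref{eq1} forces $\widehat{\mathbf{v}}_{\mathrm{od}}=\vect{0}$, i.e.\ the local $d$-axis of each inverter is aligned with its terminal voltage vector, which gives $\boldsymbol{\delta}^{\mathrm{ref}}=-\mathrm{arg}(\widehat{\mathbf{v}}^{\mathrm{ref}}_{\mathrm{oDQ}})$ modulo $\pi$; the $\pi$ ambiguity is exactly what is parameterized by $\alpha\in\mathbb{Z}^n_2$. Then \eqref{eq3} gives $\widehat{\boldsymbol{\phi}}_{\mathrm{PLL}}=\widehat{\mathbf{v}}_{\mathrm{PLL}}=\vect{0}$ and, crucially, $\dot{\boldsymbol{\delta}}=0$, so the Coriolis-type terms $\mJ[\dot{\boldsymbol{\delta}}]$ in \eqref{eq6}--\eqref{eq7} drop out at equilibrium. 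Next, the power-filter equation \eqref{eq4} yields $\widehat{\mathbf{s}}_{\mathrm{avg}}=\widehat{\mathbf{s}}=\tfrac{3}{2}\mD(\widehat{\mathbf{v}}_{\mathrm{oDQ}})\widehat{\mathbf{i}}_{\mathrm{oDQ}}$, and \eqref{eq5} then forces $\widehat{\mathbf{s}}^{\mathrm{ref}}=\widehat{\mathbf{s}}_{\mathrm{avg}}$, which is precisely the power-balance equation \eqref{eq:power_flow1}.

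With the derivatives $\dot{\widehat{\boldsymbol{\phi}}}_{\mathrm{s}}$ and $\dot{\boldsymbol{\delta}}$ set to zero, the definition of $\tilde{\mathbf{i}}_{\mathrm{lDQ}}$ collapses to $\tilde{\mathbf{i}}_{\mathrm{lDQ}}=\mR(-\boldsymbol{\delta})\mH\widehat{\boldsymbol{\phi}}_{\mathrm{s}}$, and \eqref{eq6} forces $\widehat{\mathbf{i}}_{\mathrm{lDQ}}=\tilde{\mathbf{i}}_{\mathrm{lDQ}}$; inverting using $\mH^{-1}=\mH$ and $\mR(-\boldsymbol{\delta})^{-1}=\mR(\boldsymbol{\delta})$ gives the stated formula for $\widehat{\boldsymbol{\phi}}^{\mathrm{ref}}_{\mathrm{s}}$, where the sign $(-1)^{\alpha}$ arises because rotating $\boldsymbol{\delta}$ by $\pi$ flips $\mR(\boldsymbol{\delta})$. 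Similarly \eqref{eq6}--\eqref{eq7} together give $\widehat{\mathbf{v}}_{\mathrm{lDQ}}=\widehat{\boldsymbol{\gamma}}_{\mathrm{DQ}}=\widehat{\mathbf{v}}_{\mathrm{oDQ}}$, and \eqref{eq8} yields $\widehat{\mathbf{i}}^{\mathrm{ref}}_{\mathrm{lDQ}}=[\subscr{\boldsymbol{\tau}''}{LC}\otimes I_2]\mJ\widehat{\mathbf{v}}^{\mathrm{ref}}_{\mathrm{oDQ}}+\widehat{\mathbf{i}}^{\mathrm{ref}}_{\mathrm{oDQ}}$, matching the lemma.

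The remaining and most substantive step is the network side: from \eqref{eq9} at equilibrium I would obtain $(B^{\top}\otimes I_2)\widehat{\mathbf{v}}=\mathcal{Z}\subscr{\widehat{\xi}}{DQ}$, from which $\widehat{\mathbf{i}}=(B\otimes I_2)\subscr{\widehat{\xi}}{DQ}=\widehat{Y}\widehat{\mathbf{v}}$. Using the load constitutive relation \eqref{eq:load_dynamics2} (equivalently $\subscr{\widehat{\mathbf{i}}}{L}=-([\subscr{\mathbf{R}}{L}]^{-1}\otimes I_2)\subscr{\widehat{\mathbf{v}}}{L}$) together with the block partition of $\widehat{Y}$ induced by $\mc N=\{0\}\cup\subscr{\mc N}{L}\cup\subscr{\mc N}{I}$, I would solve the middle block for $\subscr{\widehat{\mathbf{v}}}{L}$ in terms of $\subscr{\widehat{\mathbf{v}}}{oDQ}$ and $\widehat{v}_{\mathrm{gDQ}}$, and substitute into the inverter block. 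This Kron-reduction step is precisely where the matrices $\subscr{\widehat{Y}}{red}$ and $\subscr{\widehat{Y}}{g}$ of \eqref{eq:defins} appear, and it produces \eqref{eq:power_flow2}. The expression for $\subscr{\widehat{\xi}^{\mathrm{ref}}}{DQ}$ follows by back-substituting $\subscr{\widehat{\mathbf{v}}}{L}$ into $\mathcal{Z}\subscr{\widehat{\xi}}{DQ}=(B^{\top}\otimes I_2)\widehat{\mathbf{v}}$ and recognizing $\mathcal{Z}_{\mathrm{L}}$ as the natural closed-loop line operator once resistive loads are folded in. The hard part of the proof is the bookkeeping in this Kron-reduction step, and in verifying that the branch-indexed sign $(-1)^{\alpha}$ propagates consistently through $\mR(-\boldsymbol{\delta})$ and $\mH$ so that both directions of the equivalence close.
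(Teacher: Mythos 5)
Your proposal is correct and follows essentially the same route as the paper's proof: set all derivatives in \eqref{eq1}--\eqref{eq9} to zero, read off $\widehat{\mathbf{v}}_{\mathrm{PLL}}=\widehat{\mathbf{v}}_{\mathrm{od}}=\vect{0}$ to pin down $\boldsymbol{\delta}$ modulo $\pi$ (the $\alpha$ family), extract \eqref{eq:power_flow1} from the power controller and \eqref{eq:power_flow2} from the line dynamics via Kron reduction, and back-substitute for the remaining states; you simply supply more of the algebra that the paper leaves implicit, including the correct origin of the $(-1)^{\alpha}$ factor from $\mR(\alpha\pi)=(-1)^{\alpha}I_2$. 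The only nit is that $\alpha$ is indexed by inverter nodes, not branches, but this does not affect the argument.
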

\begin{proof}
Regarding $(ii) \Longrightarrow (i)$, from the power-controllers' dynamics in~\eqref{eq:network_dynamics_dim}, we can conclude that if $\widehat{\mathbf{x}}^{\mathrm{ref}}_{\alpha}$ is an equilibrium point,
then we have $\widehat{\mathbf{s}}=\widehat{\mathbf{s}}^{\mathrm{ref}}$. This implies
$(\subscr{\widehat{\mathbf{v}}^{\mathrm{ref}}}{oDQ}, \subscr{\widehat{\mathbf{i}}^{\mathrm{ref}}}{oDQ})^{\top}$ 
satisfies~\eqref{eq:power_flow1}. Moreover, at the equilibrium point
$\widehat{\mathbf{x}}^{\mathrm{ref}}_{\alpha}$, the line dynamics~\eqref{eq9} will
simplify to $\widehat{\mathbf{i}}^{\mathrm{ref}} = \widehat{Y} \widehat{\mathbf{v}}^{\mathrm{ref}}$. Using Kron
reduction~\cite{FD-FB:11d}, this implies that $(\subscr{\widehat{\mathbf{v}}^{\mathrm{ref}}}{oDQ},
\subscr{\widehat{\mathbf{i}}^{\mathrm{ref}}}{oDQ})^{\top}$ satisfies~\eqref{eq:power_flow2}. 

Regarding $(i) \Longrightarrow (ii)$, suppose
 $(\subscr{\widehat{\mathbf{v}}^{\mathrm{ref}}}{oDQ},\subscr{\widehat{\mathbf{i}}^{\mathrm{ref}}}{oDQ})^{\top}$ is a
solution to the power flow
equations~\eqref{eq:power_flow1} and~\eqref{eq:power_flow2}. Then from
the PLL dynamics in~\eqref{eq:network_dynamics_dim}, we have $\widehat{\mathbf{v}}_{\mathrm{PLL}}=\vect{0}_n$ and
$\widehat{\mathbf{v}}_{\mathrm{od}}=\vect{0}_n$. Note that, $\widehat{\mathbf{v}}_{\mathrm{od}}=\vect{0}_n$ can be written in the trigonometric form 
\begin{equation*}
v^{k,\mathrm{ref}}_{\mathrm{oD}}\cos(\delta^{k,{\mathrm{ref}}})+v^{k,{\mathrm{ref}}}_{\mathrm{oQ}}\sin(\delta^{k, \mathrm{ref}})=0.
\end{equation*}
This implies that, for every $k\in\{1,\ldots,n\}$, there exists $\alpha_k\in \mathbb{Z}_2$ such that 
$\delta^{k,{\mathrm{ref}}}=-\mathrm{arg}(v^{k,{\mathrm{ref}}}_{\mathrm{oDQ}})+\alpha_k\pi$. From the power-controller dynamics in~\eqref{eq:network_dynamics_dim}, we have $\subscr{\widehat{\mathbf{s}}}{avg}=\widehat{\mathbf{s}}^{\mathrm{ref}}$.
Finally, one can find the value of $\subscr{\widehat{\boldsymbol{\phi}}^{\mathrm{ref}}}{s}$,
$\subscr{\widehat{\mathbf{i}}^{\mathrm{ref}}}{lDQ}$,
$\subscr{\widehat{\boldsymbol{\gamma}}^{\mathrm{ref}}}{DQ}$, and $\subscr{\widehat{\xi}^{\mathrm{ref}}}{DQ}$ by solving the remaining equations in~\eqref{eq:network_dynamics_dim}. 
\end{proof}

\begin{remark}
The power-flow equations~\eqref{eq:power_flow1} and~\eqref{eq:power_flow2} have been studied
extensively in the literature and many sufficient conditions for
existence and uniqueness of solutions have been developed; see,
e.g.,~\cite{SB-SZ:16,SVD-SSG-YCC:15,CW-AB-JYLB-MP:16}. Lemma~\ref{lem:existence_algorithm}
in Appendix~\ref{app:powerflow} restates a result
from~\cite{CW-AB-JYLB-MP:16} pertaining to uniqueness that is
leveraged in subsequent results.
\end{remark}

\section{Stability Analysis of the Dimensionless Grid-tied Inverter-network Dynamics}\label{sec:stability_analysis}

In bulk power-systems dynamics literature,
it is commonplace to assume that the dynamics of the grid-following
inverters are much faster than the dynamics of grid-forming inverters and
synchronous machines~\cite{JS-DZ-RO-AMS-TS-JR:16}.
This assumption justifies the use of a static model for grid-following inverters such that the inverter nodes are considered to be sources of constant (active and reactive) power. Subsequently, the network operation is described by the following power-flow equations:
\begin{align}\label{eq:static_model_inverter}
\vect{0}_{2} = \widehat{s}^{\mathrm{ref}} - \tfrac{3}{2}\mD(\subscr{\widehat{v}}{oDQ}) \subscr{\widehat{i}}{oDQ}.
\end{align}
Quite obviously, the static
representation~\eqref{eq:static_model_inverter} does not capture stability. The following example shows that the internal dynamics of the inverters can induce instabilities, even if the power-flow equations in~\eqref{eq:static_model_inverter} admit a high-voltage solution.

\begin{example}\emph{(\textbf{Instabilities Induced by Inverter Dynamics})}\label{exm:dynamic_vs_static}
Consider the radial grid-connected network consisting of $25$ identical
inverters (a sketch is provided in Fig.~\ref{fig:inverter}). Suppose
the inverters have uniform reference power injections $\mathbf{p}^{\mathrm{ref}} =
p\vect{1}_{25}$, each line has resistance $R=10^{-2}\ \Omega$ and inductance $L=10^{-5}\ \mathrm{H}$,
and the grid voltage is $\subscr{v}{gDQ} = [0, 120\sqrt{2}]^{\top}\ \mathrm{V(peak)}$ with constant frequency $\omega_{\mathrm{nom}} = 120\pi\ \mathrm{rad/s}$. 

\begin{enumerate}
  \item \textbf{Static model:} If inverter power injections satisfy
    \begin{align}\label{eq:power_flow_stability}
      \widehat{p}\left\|\subscr{\widehat{Y}^{-1}}{red}\right\|_{\mathbb{C},\infty}\le \tfrac{3}{8},
    \end{align}
then, there exists a unique high-voltage, low-current solution for the power-flow equations~\eqref{eq:static_model_inverter} (see Lemma~\ref{lem:existence_algorithm}).

  \item \textbf{Dynamic model:} We use the dynamic model~\eqref{eq:inverter_dynamics} for the inverters
(parameters are given in the fourth
  column of Table~\ref{tab:parameters}). The governing
  equations for the network are in~\eqref{eq:network_dynamics_dim},
  and condition~\eqref{eq:power_flow_stability} via Lemma~\ref{lem:power_flow_equilibrium_correspondence} 
  guarantees the existence of a family of equilibrium points $\widehat{\mathbf{x}}^{\mathrm{ref}}_{\alpha}$, $\alpha\in \mathbb{Z}^n_2$ for the system~\eqref{eq:network_dynamics_dim}. Linearizing the system~\eqref{eq:network_dynamics_dim}, we study 
  local stability of the equilibrium point  $\widehat{\mathbf{x}}^{\mathrm{ref}}_{\mathrm{0}}$.
\end{enumerate}
  Figure~\ref{example} plots the maximum real part of the eigenvalues of the linearized
  system~\eqref{eq:network_dynamics_dim} around the equilibrium point
  $\widehat{\mathbf{x}}^{\mathrm{ref}}_{\mathrm{0}}$ as a function of active-power injection. The red vertical line is the threshold
  of the power injection for which the power-flow equations admit a unique solution (obtained from~\eqref{eq:power_flow_stability}). Notice that there are power injections for which a high-voltage solution of the power-flow equations exists, however, the corresponding equilibrium point $\widehat{\mathbf{x}}^{\mathrm{ref}}_{\mathrm{0}}$ is not stable.
\end{example}

\subsection{Small-signal Stability via Time-scale Separation}
We now focus on the small-signal stability of the dimensionless grid-tied inverter-network dynamics~\eqref{eq:network_dynamics_dim}. Due to high dimensionality and nonlinearity of the dynamic model, studying small-signal stability is not analytically tractable. Therefore, it is a reasonable goal to reduce the model order. To that end, we identify a 
physically meaningful parametrization of the inverters, and we show that under suitable assumptions, this parametrization leads
to a time-scale decomposition of the
system~\eqref{eq:network_dynamics_dim} which simplifies analysis. We begin by uncovering time constants of different sub-systems. 

\begin{definition}[Singular perturbation parameter]\label{assu2}
For dynamics~\eqref{eq:network_dynamics_dim}, we define:
\begin{align*}
&\epsilon_{\mathrm{I}} =  \max\{\|\subscr{\boldsymbol{\tau}}{PLL}\|_{\infty}, \|\subscr{\boldsymbol{\tau}'}{PLL}\|_{\infty},
  \|\subscr{\boldsymbol{\tau}}{s}\|_{\infty},\|\subscr{\mathbf{T}}{s}\|_{\infty},
   \|\subscr{\mathbf{T}}{PLL}\|_{\infty},\\
  &\|\subscr{\boldsymbol{\tau}}{c}\|^{\frac{1}{2}}_{\infty},
  \|\subscr{\mathbf{T}}{c}\|^{\frac{1}{2}}_{\infty}\},\\
  &\epsilon_{\mathcal{E}}=\max\left\{\|\subscr{\boldsymbol{\tau}}{LC}\|^{\frac{1}{2}}_{\infty}, \|\subscr{\boldsymbol{\tau}'}{LC}\|^{\frac{1}{2}}_{\infty},\|\boldsymbol{\tau}_{\mathcal{E}}\|^{\frac{1}{2}}_{\infty}\right\}.
\end{align*}
Using these, we define the \emph{singular perturbation parameter}:
  \begin{equation} \label{eq:eps}
\epsilon := \max\{\epsilon_{\mathrm{I}}, \epsilon_{\mathcal{E}}\}.
\end{equation}
\end{definition}
\begin{remark} \label{rem:good}
\begin{enumerate}
    \item Definition~\ref{assu2} establishes a
  physically meaningful time-scale separation of the components of the
  inverter network when $\epsilon\ll 1$. In this case, 
  $ \|\boldsymbol{\tau}'_{\mathrm{LC}}\|_{\infty},\|\boldsymbol{\tau}_{\mathrm{LC}}\|_{\infty}, \|\boldsymbol{\tau}_{\mathcal{E}}\|_{\infty},
  \|\subscr{\boldsymbol{\tau}}{c}\|_{\infty},
  \|\subscr{\mathbf{T}}{c}\|_{\infty} \le \epsilon^2$ which
  implies that the line dynamics, the $LC$ filter, and the
  current controller of the inverter are the fastest
  components of the network. Moreover, $\|\subscr{\boldsymbol{\tau}}{PLL}\|_{\infty},
  \|\subscr{\boldsymbol{\tau}'}{PLL}\|_{\infty},\|\subscr{\boldsymbol{\tau}}{s}\|_{\infty}\le
  \epsilon$ which implies that the
  PLL, and averaging-part of the power controller (i.e.,
  $\widehat{\mathbf{s}}_\mathrm{avg}$) are slower than the current
  controller, the line dynamics, and the $LC$ filter but they are faster
  than the steady-state power-tracking controller (i.e., $\widehat{\boldsymbol{\phi}}_\mathrm{s}$) dynamics.
\item  The assumption $\epsilon\ll 1$ is equivalent to
  $\subscr{\epsilon}{I}, \subscr{\epsilon}{L}\ll
  1$, which is realistic in practice. For instance, in the 
  inverter models with the parameters in
Table~\ref{tab:parameters} and with the
parameters used in~\cite{NP-MP-TCG:07,MR-JAM-JWK:14,MR-JAM-JWK:15}, it holds that
$\subscr{\epsilon}{I},\subscr{\epsilon}{L}\le 0.1$. 
\item\label{up} $\tau_{e} =
  \frac{\subscr{L}{e}}{\sqrt{\subscr{R^2}{e} +
      \subscr{\omega^2}{nom}\subscr{L^2}{e}}} \le
  \subscr{\omega^{-1}}{nom}$, for every $e\in
  \mathcal{E}$. Similarly, one can show that
  $\|\subscr{\boldsymbol{\tau}'}{LC}\|_{\infty}, \|\subscr{\boldsymbol{\tau}}{LC}\|_{\infty}\le
  \subscr{\omega^{-\frac{1}{2}}}{nom}$ and this gives an upper bound
  for the LC and line parameter $\epsilon_{\mathcal{E}} \le \subscr{\omega^{-\frac{1}{2}}}{nom}$.

\item For a different set of parameters and control architectures, one can conceivably identify a different singular perturbation parameter and time-scale decomposition. However, we expect the general nature of the stability result that follows to be similar.
\end{enumerate}
\end{remark}

\begin{figure}[t!]
\centering
\vspace{-0.2in}
\includegraphics[width=0.8\textwidth]{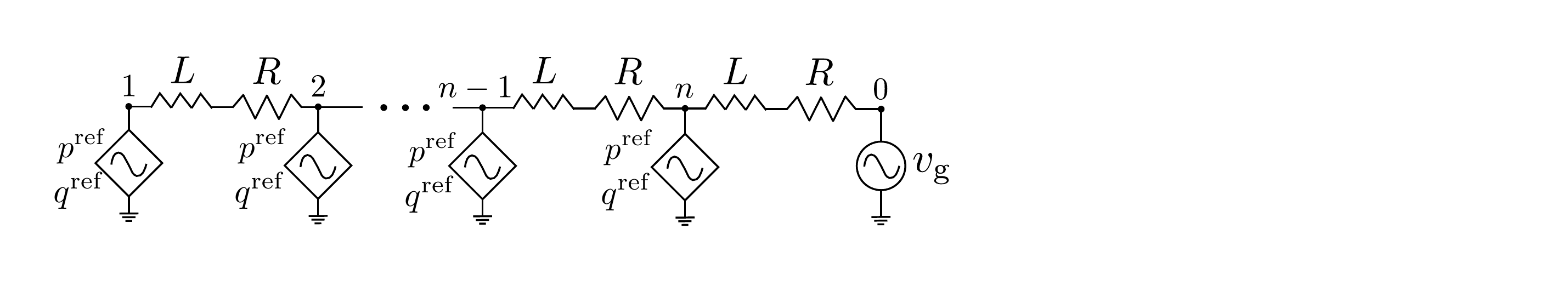} \vspace{-0.4in}
\caption{A radial network with $n$ inverters connected to the grid bus.}
\label{fig:inverter}
\end{figure}
\begin{figure}[t!]\centering
\includegraphics[width=0.9\linewidth]{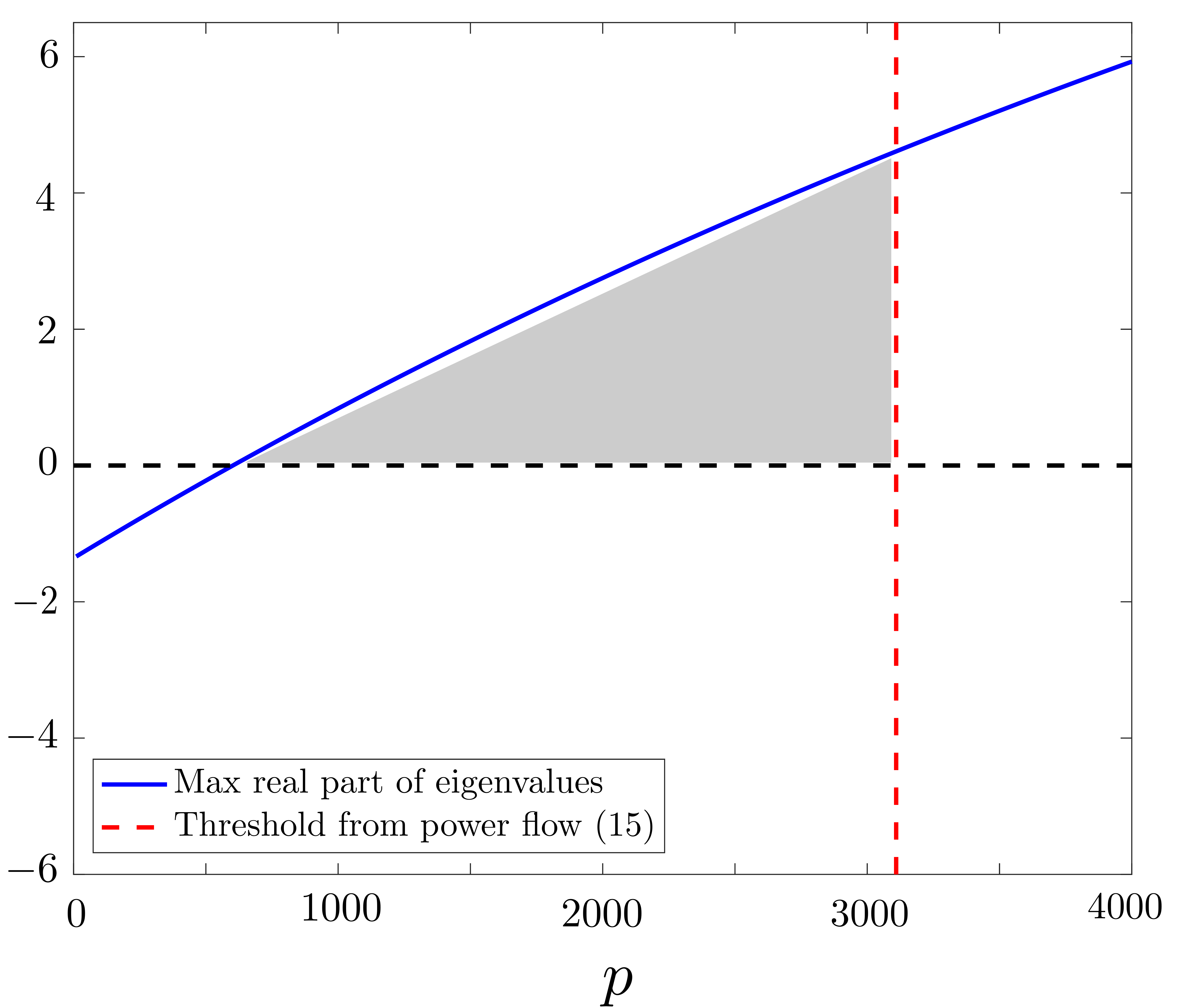}
\caption{Shaded region shows power injections that admit a unique power flow solution that is, however, not small-signal stable.}
\label{example}
\end{figure}

\begin{theorem}[Small-signal Stability]\label{thm:existence_stability}
Consider the dimensionless grid-tied inverter-network
dynamics~\eqref{eq:network_dynamics_dim} with states in $\mathbb{R}^{13n+2m}$ and
references $\widehat{\mathbf{s}}^{\mathrm{ref}}\in \mathbb{R}^{2n}$. The following hold:
\begin{enumerate}
\item\label{p1:exitence} if,
\begin{align}\label{c1:existence}
\left\|\mD'(\widehat{\mathbf{w}})\subscr{\widehat{Y}^{-1}}{red}(\mD'(\widehat{\mathbf{w}}))^{-1}\mD'(\widehat{\mathbf{s}}^{\mathrm{ref}})\right\|_{\mathbb{C},\infty}\le
  \tfrac{3}{8},
\end{align}
then, there is a unique solution
$(\subscr{\widehat{\mathbf{v}}^{\mathrm{ref}}}{oDQ},\subscr{\widehat{\mathbf{i}}^{\mathrm{ref}}}{oDQ})^{\top}$ for
the power-flow equations~\eqref{eq:power_flow1} and~\eqref{eq:power_flow2} and a family of equilibrium points $\widehat{\mathbf{x}}^{\mathrm{ref}}_\alpha$,
$\alpha\in \mathbb{Z}_{2}^{n}$, for the
grid-tied inverter network  dynamics~\eqref{eq:network_dynamics_dim} satisfying:
\begin{align*}
\left\|\subscr{\widehat{\mathbf{v}}^{\mathrm{ref}}}{oDQ}-\widehat{\mathbf{w}} \right\|_{\mathbb{C},\infty}\le \tfrac{1}{2}\|\widehat{\mathbf{w}}\|_{\mathbb{C},\infty};
\end{align*} 
\item\label{p2:stability} if additionally,   $[\subscr{\mathbf{T}}{PLL}]\succ [\subscr{\boldsymbol{\tau}}{PLL}]$
  and the $2n\times 2n$ matrix
  \begin{multline}\label{c2:stability}
\hspace{-0.3cm} M :=
-\Big(\mD(\subscr{\widehat{\mathbf{v}}^{\mathrm{ref}}}{oDQ})
\subscr{\widehat{Y}}{red}+\mD'(\subscr{\widehat{\mathbf{i}}^{\mathrm{ref}}}{oDQ})\Big)
\times \\ \subscr{\widehat{Y}}{C,red}^{-1}\mathbf{R}(-\boldsymbol{\delta}^{\mathrm{ref}}) \mH[\subscr{\boldsymbol{\tau}'}{s}\otimes
    I_2]^{-1}
  \end{multline}
  is Hurwitz, then, there exists an $\epsilon^*>0$ such that, for every $\epsilon\le
 \epsilon^*$, the equilibrium point $\widehat{\mathbf{x}}_{\mathrm{0}}^{\mathrm{ref}}$
 is locally exponentially stable. 
\end{enumerate}
\end{theorem}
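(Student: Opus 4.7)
For part (i), the plan is to eliminate $\subscr{\widehat{\mathbf{i}}^{\mathrm{ref}}}{oDQ}$ by substituting~\eqref{eq:power_flow2} into~\eqref{eq:power_flow1}, reparametrize around the open-circuit voltage $\widehat{\mathbf{w}}$, and apply the fixed-point result of Lemma~\ref{lem:existence_algorithm}. The operator $\mD'(\widehat{\mathbf{w}})$ appears precisely because it captures how the coupled $\mathrm{DQ}$-channels of the bilinear map $\widehat{\mathbf{s}}=\tfrac32\mD(\widehat{\mathbf{v}}_{\mathrm{oDQ}})\widehat{\mathbf{i}}_{\mathrm{oDQ}}$ interact through the $\mathbb{C},\infty$-norm, so that the contraction condition collapses to~\eqref{c1:existence}; the ball estimate $\|\subscr{\widehat{\mathbf{v}}^{\mathrm{ref}}}{oDQ}-\widehat{\mathbf{w}}\|_{\mathbb{C},\infty}\le \tfrac12\|\widehat{\mathbf{w}}\|_{\mathbb{C},\infty}$ is a direct by-product of the contraction iteration. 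Once a unique high-voltage power-flow solution is secured, Lemma~\ref{lem:power_flow_equilibrium_correspondence} immediately produces the equilibrium family $\widehat{\mathbf{x}}^{\mathrm{ref}}_\alpha$.

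For part (ii), the plan is a nested Tikhonov-type singular perturbation argument driven by the ordering of time constants in Definition~\ref{assu2} and Remark~\ref{rem:good}. I would first rewrite~\eqref{eq1}--\eqref{eq9} so that $\epsilon^2$ appears explicitly on the left of the line, $LC$-filter, and current-controller equations, and $\epsilon$ on the left of the PLL, frequency, and power-averaging equations. This exposes three time scales: fastest variables $(\subscr{\widehat{\boldsymbol{\gamma}}}{DQ},\widehat{\mathbf{i}}_{\mathrm{lDQ}},\widehat{\mathbf{v}}_{\mathrm{oDQ}},\subscr{\widehat{\xi}}{DQ})$ at scale $\epsilon^2$; intermediate variables $(\widehat{\mathbf{v}}_{\mathrm{PLL}},\widehat{\boldsymbol{\phi}}_{\mathrm{PLL}},\boldsymbol{\delta},\widehat{\mathbf{s}}_{\mathrm{avg}})$ at scale $\epsilon$; and the single slow variable $\subscr{\widehat{\boldsymbol{\phi}}}{s}$. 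Setting the fastest equations to zero and Kron-reducing yields the algebraic map $\widehat{\mathbf{i}}_{\mathrm{oDQ}}=\subscr{\widehat{Y}}{red}\widehat{\mathbf{v}}_{\mathrm{oDQ}}+\subscr{\widehat{Y}}{g}\widehat{v}_{\mathrm{gDQ}}$, with the capacitor currents of the $LC$ filter accounted for through $\subscr{\widehat{Y}}{C,red}$; the corresponding boundary-layer Jacobian is Hurwitz because the line and $LC$ dynamics are passive with dissipation from $\mathbf{R}_{\mc E}$ and $\subscr{\mathbf{R}}{L}$ and the current-controller PI loop is Hurwitz by design. At the intermediate scale the PLL block forces $\widehat{\mathbf{v}}_{\mathrm{PLL}}\to\vect{0}_n$ and $\boldsymbol{\delta}\to-\mathrm{arg}(\widehat{\mathbf{v}}_{\mathrm{oDQ}})$; a direct Routh--Hurwitz computation on the resulting $3$-state per-inverter PLL Jacobian reduces Hurwitzness to $\subscr{\mathrm T}{PLL,k}>\subscr{\tau}{PLL,k}$ for every $k$, equivalently $[\subscr{\mathbf{T}}{PLL}]\succ[\subscr{\boldsymbol{\tau}}{PLL}]$, while the averaging equation for $\widehat{\mathbf{s}}_{\mathrm{avg}}$ is trivially Hurwitz.

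Substituting both quasi-steady-state maps into the remaining slow equation $\subscr{\dot{\widehat{\boldsymbol{\phi}}}}{s}=[\subscr{\boldsymbol{\tau}'}{s}\otimes I_2]^{-1}(\widehat{\mathbf{s}}^{\mathrm{ref}}-\widehat{\mathbf{s}}_{\mathrm{avg}})$ and linearizing at $\widehat{\mathbf{x}}^{\mathrm{ref}}_0$ should produce exactly the matrix $M$ in~\eqref{c2:stability}: the factor $\mathbf{R}(-\boldsymbol{\delta}^{\mathrm{ref}})\mH$ encodes how $\subscr{\widehat{\boldsymbol{\phi}}}{s}$ is mapped into the $\mathrm{dq}$-frame current reference, $\subscr{\widehat{Y}}{C,red}^{-1}$ inverts the capacitor-augmented Kron-reduced admittance to express $\widehat{\mathbf{v}}_{\mathrm{oDQ}}$ in terms of the slow input, and the prefactor $\mD(\subscr{\widehat{\mathbf{v}}^{\mathrm{ref}}}{oDQ})\subscr{\widehat{Y}}{red}+\mD'(\subscr{\widehat{\mathbf{i}}^{\mathrm{ref}}}{oDQ})$ is the Jacobian of $\tfrac32\mD(\widehat{\mathbf{v}}_{\mathrm{oDQ}})\widehat{\mathbf{i}}_{\mathrm{oDQ}}$ along that manifold. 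Hurwitzness of $M$, combined with the Hurwitz properties of the two boundary-layer systems, then lets me invoke the multi-parameter Tikhonov theorem of~\cite{VRS-JOR-PVK:84} to conclude local exponential stability of $\widehat{\mathbf{x}}^{\mathrm{ref}}_0$ for every $\epsilon\le \epsilon^*$. The hardest part will be verifying regularity of the nested quasi-steady-state manifolds: showing they remain isolated despite the rotational couplings $\mJ$, $\mathbf{R}(-\boldsymbol{\delta})$, and $[\dot{\boldsymbol{\delta}}]$ that mix the global $\mathrm{DQ}$- and local $\mathrm{dq}$-frames, and then tracking derivatives through the Kron reduction so that the slow Jacobian collapses to the compact closed form~\eqref{c2:stability}.
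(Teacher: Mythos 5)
Your proposal is correct and follows essentially the same route as the paper: part (i) via the fixed-point result of Lemma~\ref{lem:existence_algorithm} combined with Lemma~\ref{lem:power_flow_equilibrium_correspondence}, and part (ii) via the same three-time-scale split (fast electrical/current-controller states at $\epsilon^2$, PLL and power-averaging states at $\epsilon$, $\subscr{\widehat{\boldsymbol{\phi}}}{s}$ slow), with the fast boundary layer handled by a passivity/dissipation argument, the intermediate one by Routh--Hurwitz yielding $[\subscr{\mathbf{T}}{PLL}]\succ[\subscr{\boldsymbol{\tau}}{PLL}]$, and the reduced slow Jacobian collapsing to $M$. The only cosmetic difference is that the paper invokes Khalil's nested singular-perturbation theorem rather than the multi-parameter Tikhonov result you cite, and it verifies the fast boundary layer via an explicit LaSalle argument (Lemma~\ref{lem:hurwitz}); neither changes the substance.
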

\begin{proof}
Regarding part~\ref{p1:exitence}, the proof follows from combining Lemma~\ref{lem:existence_algorithm} and
  Lemma~\ref{lem:power_flow_equilibrium_correspondence}. Regarding
  part~\ref{p2:stability}, consider the dimensionless grid-tied
  inverter network
  dynamics~\eqref{eq:network_dynamics_dim}. Using~\eqref{eq:eps} and defining the
    variable $\Delta \mathbf{x}\in \mathbb{R}^{13n+2m}$ by $\Delta \mathbf{x}
    = \mathbf{x} - \mathbf{x}^{\mathrm{ref}}_{0}$, we get a three-time-scale
  decomposition of the system. We define the states $\mathbf{z}_1\in \mathbb{R}^{6n+2m}$,
  $\mathbf{z}_2\in \mathbb{R}^{5n}$, and $\mathbf{z}_3\in \mathbb{R}^{2n}$ as
  follows: 
\begin{align*} 
\mathbf{z}_1 & = \begin{pmatrix}\Delta\subscr{\widehat{\boldsymbol{\gamma}}}{DQ}&
  \Delta{\widehat{\mathbf{i}}}_{\mathrm{lDQ}}&\Delta{\widehat{\mathbf{v}}}_{\mathrm{oDQ}}
  & \Delta\subscr{\xi}{DQ}\end{pmatrix}^{\top},\\
 \mathbf{z}_2 &= \begin{pmatrix}{\Delta\widehat{\mathbf{v}}}_{\mathrm{PLL}} & \Delta\widehat{\boldsymbol{\phi}}_{\mathrm{PLL}} &
 \Delta\boldsymbol{\delta} & \Delta{\widehat{\mathbf{s}}}_{\mathrm{avg}}\end{pmatrix}^{\top},\\
\mathbf{z}_3 &= \Delta{\subscr{\widehat{\boldsymbol{\phi}}}{s}},
\end{align*}
where $\mathbf{z}_1$ is \emph{faster} than $\mathbf{z}_2$ and $\mathbf{z}_2$
are \emph{faster} than $\mathbf{z}_3$. The corresponding time-scales are given
by $\tau = t/\epsilon^2$ and $\tau'
=t/\epsilon$. Using these time scales, 
the dimensionless grid-tied inverter network
dynamics~\eqref{eq:network_dynamics_dim} can be written as follows:
   \begin{align}
   \begin{split}
\dot{\mathbf{z}}_3 &=
  g_3(\mathbf{z}_1,\mathbf{z}_2, \mathbf{z}_3,\epsilon),\\
\epsilon\dot{\mathbf{z}}_2 &=
  g_2(\mathbf{z}_1,\mathbf{z}_2, \mathbf{z}_3,\epsilon),\\
\epsilon^2\dot{\mathbf{z}}_1 &=
  g_1(\mathbf{z}_1,\mathbf{z}_2, \mathbf{z}_3,\epsilon), 
  \end{split}
  \end{align} 
where $g_1,g_2,g_3$ are suitably defined functions. The quasi-steady-state manifold for the
time-scale $\tau$ is the manifold $\mathbf{z}_1 = h_1(\mathbf{z}_2, \mathbf{z}_3)$ obtained by solving the algebraic equations 
$g_3(\mathbf{z}_1,\mathbf{z}_2,\mathbf{z}_3,0) = \vect{0}_{(6n+2m)}$~\cite[\S 11.2]{HKK:02}. After some algebraic computations, the quasi-steady-state manifold for the time-scale
$\tau$ is time-invariant and is given by:
\begin{align*}
&\Delta\subscr{\widehat{\boldsymbol{\gamma}}}{DQ}  =
                   \subscr{\widehat{Y}^{-1}}{C,red} \mR(-\boldsymbol{\delta}) \mH\Delta\subscr{\widehat{\boldsymbol{\phi}}}{s}, \\
&\Delta\subscr{\widehat{\mathbf{i}}}{lDQ}  =
                  \mR(-\boldsymbol{\delta}) \mH\Delta\subscr{\widehat{\boldsymbol{\phi}}}{s},\\
&\Delta\subscr{\widehat{\mathbf{v}}}{oDQ} 
                                           =\subscr{\widehat{Y}^{-1}}{C,red} \mR(-\boldsymbol{\delta})\mH\Delta\subscr{\widehat{\boldsymbol{\phi}}}{s},\\
&\Delta \subscr{\widehat{\xi}}{DQ} = \subscr{\mathcal{Z}^{-1}}{L}(\subscr{B^{\top}}{I}\otimes I_2) \subscr{\widehat{Y}^{-1}}{C,red} \mR(-\boldsymbol{\delta})\mH\Delta\subscr{\widehat{\boldsymbol{\phi}}}{s}.
\end{align*}
Since the quasi-steady state manifold for the time-scale $\tau$ is an isolated manifold, the singular perturbation problem is in the standard form. The boundary-layer dynamics are:
\begin{subequations}\label{eq:fastest_bl}
\begin{align}
\hspace{-0.25cm}\frac{d \Delta\subscr{\widehat{\boldsymbol{\gamma}}}{DQ}}{d\tau}&=
-\epsilon^2[\subscr{\boldsymbol{\tau}}{c}\otimes I_2]^{-1} 
                                     \Delta\subscr{\widehat{\mathbf{i}}}{lDQ},\label{eq:fastest_bl_1}\\
\hspace{-0.25cm}\frac{d \Delta\widehat{\mathbf{i}}_{\mathrm{lDQ}}}{d\tau} &=
\epsilon^2([\subscr{\boldsymbol{\tau}}{LC}]^{-1}[\mathcal{X}]\otimes I_2)
\Big(\subscr{\Delta\widehat{\boldsymbol{\gamma}}}{DQ}
-\Delta\widehat{\mathbf{v}}_{\mathrm{oDQ}}\nonumber\\ & -
                                   [\subscr{\boldsymbol{\tau}}{c}\otimes I_2]^{-1}[\subscr{\mathbf{T}}{c}\otimes I_2]{\Delta\widehat{\mathbf{i}}}_{\mathrm{lDQ}}\Big), \label{eq:fastest_bl_2}\\ 
\hspace{-0.25cm}\frac{d \Delta\widehat{\mathbf{v}}_{\mathrm{oDQ}}}{d\tau} 
&=\epsilon^2([\subscr{\boldsymbol{\tau}'}{LC}][\mathcal{X}]\otimes I_2)^{-1}\Big( \Delta\widehat{\mathbf{i}}_{\mathrm{lDQ}} -
                                                            \Delta\widehat{\mathbf{i}}_{\mathrm{oDQ}}\nonumber\\
  & -[\subscr{\boldsymbol{\tau}''}{LC}\otimes I_2]\mJ\widehat{\mathbf{v}}_{\mathrm{oDQ}}\Big),\label{eq:fastest_bl_3}\\
\hspace{-0.25cm}\frac{d \Delta\widehat{\xi}_{\mathrm{DQ}}}{d\tau} &= \epsilon^2
                                                    ([\boldsymbol{\tau}_{\mathcal{E}}][\boldsymbol{\tau}'_{\mathcal{E}}]\otimes
                                                                                                                                                                                                                                     I_2)^{-1}\Big((\subscr{B^{\top}}{I}\otimes
                                                    I_2) \Delta
                                                    \widehat{\mathbf{v}}_{\mathrm{oDQ}}
                                                                                                                                                                                                                                    \nonumber \\
  & 
  - \mathcal{Z}_{\mathrm{L}}\widehat{\xi}_{\mathrm{DQ}}\Big)\label{eq:fastest_bl_4}.
\end{align}
\end{subequations}
We first show that for the boundary-layer dynamics~\eqref{eq:fastest_bl}, the
origin is the exponentially stable equilibrium point. It is easy to see
that since the graph is connected, we have
$\mathrm{Ker}(\subscr{B}{I}^{\top}) = \{\vect{0}_{n}\}$, the matrix $([\subscr{\boldsymbol{\tau}''}{LC}\otimes
I_2]) \mJ$ is skew-symmetric and the matrix $\mathcal{Z}_{\mathrm{L}}+\mathcal{Z}^{\top}_{\mathrm{L}}$ is negative definite. Hence, using Lemma~\ref{lem:hurwitz}, the origin is the locally exponentially stable
point of the boundary-layer dynamics~\eqref{eq:fastest_bl}. 
Similarly, the quasi-steady-state manifold for time-scale $\tau'$ is
$\mathbf{z}_2 = h_2(\mathbf{z}_3)$ obtained by solving the algebraic
equations $g_2(h_1(\mathbf{z}_2,\mathbf{z}_3),\mathbf{z}_2,
\mathbf{z}_3, 0) = \vect{0}_{5n}$. Thus, the quasi-steady-state
manifold for time-scale $\tau'$ is time-invariant and is given by:  
\begin{align*}
&\hspace{-0.3cm}\Delta\widehat{\mathbf{v}}_\mathrm{PLL}  =\vect{0}_n, \\
&\hspace{-0.3cm}\Delta\widehat{\boldsymbol{\phi}}_\mathrm{PLL}  = \vect{0}_n,\\
&\hspace{-0.3cm}\Delta\boldsymbol{\delta} =\boldsymbol{\delta}^{BL}
                            - \mathrm{arg}(\widehat{\mathbf{v}}^{\mathrm{ref}}_{\mathrm{oDQ}}),\\
&\hspace{-0.3cm}\Delta\mathrm{\widehat{\mathbf{s}}}_{\mathrm{avg}} = 
                                                     -\tfrac{3}{2}\mD\big(\subscr{\widehat{Y}^{-1}}{C,red}\mR(-\boldsymbol{\delta}^{BL}) \mH\Delta\subscr{\widehat{\boldsymbol{\phi}}}{s}
                                                                                                    +
                                                                                                    \widehat{\mathbf{v}}^{\mathrm{ref}}_{\mathrm{oDQ}}\big)
                                                                                                    \times
                                                                                                    \\
  & \left(\subscr{\widehat{Y}}{red}\subscr{\widehat{Y}^{-1}}{C,red}\mR(-\boldsymbol{\delta}^{BL})\mH\Delta\subscr{\widehat{\boldsymbol{\phi}}}{s}
                                                                                                    +
                                                                                                    \widehat{\mathbf{i}}^{\mathrm{ref}}_{\mathrm{oDQ}}\right)
\end{align*}
where
$\boldsymbol{\delta}^{BL}=\boldsymbol{\delta}^{BL}(\Delta\subscr{\widehat{\boldsymbol{\phi}}}{s})$
is the solution to:
\begin{align*}
\boldsymbol{\delta} -
  \mathrm{arg}\left(\subscr{\widehat{Y}^{-1}}{C,red}\mR(-\boldsymbol{\delta})\mH\Delta\subscr{\widehat{\boldsymbol{\phi}}}{s}\right)
  = \vect{0}_{n}.
  \end{align*}
Since the quasi-steady state manifold for the time-scale $\tau'$ is an isolated manifold, the
singular perturbation problem is in the standard form with boundary-layer dynamics: 
\begin{subequations}\label{eq:medium_bl}
\begin{align}
\frac{d \Delta\widehat{\mathbf{v}}_{\mathrm{PLL}}}{d\tau'} &=
\epsilon[\subscr{\boldsymbol{\tau}}{PLL}]^{-1} (
\Delta\widehat{\mathbf{v}}_{\mathrm{od}} - \Delta\widehat{\mathbf{v}}_\mathrm{PLL}
), \\
\frac{d \Delta\widehat{\boldsymbol{\phi}}_{\mathrm{PLL}}}{d\tau'} &= -\epsilon[\subscr{\mathbf{T}}{PLL}]^{-1}\Delta\widehat{\mathbf{v}}_\mathrm{PLL}, \\
\frac{d \Delta\boldsymbol{\delta}}{d\tau'} &=
\epsilon[\subscr{\boldsymbol{\tau}'}{PLL}]^{-1}(\Delta \widehat{\boldsymbol{\phi}}_\mathrm{PLL}
-\Delta\widehat{\mathbf{v}}_\mathrm{PLL}),\\
\frac{d \Delta\widehat{\mathbf{s}}_{\mathrm{avg}}}{d\tau'}&= -
\epsilon[\subscr{\boldsymbol{\tau}}{s}\otimes I_2]^{-1}\Delta\widehat{\mathbf{s}}_{\mathrm{avg}}.
\end{align}
\end{subequations}
We show that the boundary-layer dynamics~\eqref{eq:medium_bl} are stable around the origin. Note
that the linearized boundary-layer equation dynamics around the origin
have the form $\Delta{\mathbf{\dot z}_2} = S \Delta\mathbf{z}_2$, where
$S\in \mathbb{R}^{5n\times 5n}$ has the upper block triangular form
$S= \begin{pmatrix}S_{11} & S_{12} \\ \vect{0}_{2n\times 3n} & S_{22}\end{pmatrix}$ with:
\begin{align*}
S_{11} &= \begin{pmatrix}-\epsilon[\subscr{\boldsymbol{\tau}}{PLL}]^{-1} &
  \vect{0}_{n\times n} &
  \epsilon[\subscr{\boldsymbol{\tau}}{PLL}]^{-1}\subscr{\widehat{\mathbf{v}}^*}{oq}\\
  -\epsilon[\mathbf{T}_\mathrm{PLL}]^{-1}
  &\vect{0}_{n\times n} & \vect{0}_{n\times n} \\ -\epsilon[\subscr{\boldsymbol{\tau}'}{PLL}]^{-1} &
  \epsilon[\subscr{\boldsymbol{\tau}'}{PLL}]^{-1} & \vect{0}_{n\times
    n} \end{pmatrix}, \\ S_{22} &=
                                                    -\epsilon[\subscr{\boldsymbol{\tau}}{s}\otimes I_2]^{-1}. 
\end{align*}
Since $[\subscr{\mathbf{T}}{PLL}] \succ
[\subscr{\boldsymbol{\tau}}{PLL}]$, by Lemma~\ref{lem:hurwitz}, matrix $S$ is
Hurwitz. Therefore, the origin is the locally exponentially stable
point of the boundary-layer
equations~\eqref{eq:medium_bl}. 
Now, we consider the reduced-order dynamics. The reduced-order model
for the multi-time-scale analysis is given by:
\begin{align}\label{eq:rom}
  \dot{\mathbf{z}}_3 = g_3(h_1(h_2(\mathbf{z}_3),\mathbf{z}_3),
  h_2(\mathbf{z}_3),\mathbf{z}_3,0). 
  \end{align}
In order to study the stability of the reduced-order model~\eqref{eq:rom},
we introduce the new variable $\boldsymbol{\eta} =
\mR(-\boldsymbol{\delta}^{BL})\mH\Delta\subscr{\widehat{\boldsymbol{\phi}}}{s}$. First
note that, for $\epsilon=0$, we have $\Delta\widehat{\boldsymbol{\phi}}_\mathrm{s} = \vect{0}_{2n}$ and
$\boldsymbol{\delta}^{BL}=\boldsymbol{\delta}^{\mathrm{ref}}$. This implies that $ \dot{\boldsymbol{\eta}} =
\mR(-\boldsymbol{\delta}^{\mathrm{ref}})\mH\Delta\subscr{\dot{\widehat{\boldsymbol{\phi}}}}{s}$
and the reduced order model~\eqref{eq:rom} is given by:
\begin{align*}
 \dot{\boldsymbol{\eta}} = -\tfrac{3}{2}\mR(-\boldsymbol{\delta}^{\mathrm{ref}})\mH[\subscr{\boldsymbol{\tau}'}{s}&\otimes
                                                I_2]^{-1}\mD\big(\subscr{\widehat{Y}^{-1}}{C,red}\boldsymbol{\eta}
                                                                                                    +
                                                                                                    \widehat{\mathbf{v}}^{\mathrm{ref}}_{\mathrm{oDQ}}\big)
                                                                                                    \times
                                                                                                    \\
  & \left(\subscr{\widehat{Y}}{red}\subscr{\widehat{Y}^{-1}}{C,red}\boldsymbol{\eta}
                                                                                                    +
                                                                                                    \widehat{\mathbf{i}}^{\mathrm{ref}}_{\mathrm{oDQ}}\right).
  \end{align*}
 Linearizing the above equation, we get $\Delta \dot{\boldsymbol{\eta}}= M'
\Delta\boldsymbol{\eta}$, where 
  \begin{align*}
\hspace{-0.05in}M' :=
    -\tfrac{3}{2}\mR(-\boldsymbol{\delta}^{\mathrm{ref}})\mH&[\subscr{\boldsymbol{\tau}'}{s}\otimes
    I_2]^{-1}\times \\ &\Big(\mD(\subscr{\widehat{\mathbf{v}}^{\mathrm{ref}}}{oDQ}) \subscr{\widehat{Y}}{red}^{-1}+\mD'(\subscr{\widehat{\mathbf{i}}^{\mathrm{ref}}}{oDQ}) \Big)\subscr{\widehat{Y}}{C,red}^{-1}.
  \end{align*}
Note that, for any scalar $c\in \mathbb{R}_{>0}$ and any matrix $A\in
\mathbb{R}^{2n\times 2n}$, the matrix $A$ is Hurwitz if and only if
the matrix $cA$ is Hurwitz. Moreover, by~\cite[Theorem
1.3.22]{RAH-CRJ:12}, Hurwitzness of the matrix $M'$ is
equivalent to the Hurwitzness of matrix $M$ defined
in~\eqref{c2:stability}. This means that $M'$ is Hurwitz and the origin is a locally exponentially stable
equilibrium point of the reduced-order
model~\eqref{eq:rom}. Stability of the equilibrium point of the dimensionless grid-tied inverter network dynamics follows from condition~\emph{(ii)}~\cite[Theorem 11.4]{HKK:02}.\end{proof}

We provide a few contextualizing and clarifying remarks.
\begin{remark}
\begin{enumerate}[label=({\arabic*})]
\item It is well-known in singular perturbation analysis that the
  largest value of $\epsilon^*$ for which Theorem~\ref{thm:existence_stability}\ref{p2:stability} holds is
  hard to compute. A standard lower bound on $\epsilon^*$ can be obtained using techniques in~\cite[Lemma 2.2]{PVK-HKK-JOR:99}.



\item Small-signal
  stability analysis of the grid-tied inverter network
  dynamics~\eqref{eq:network_dynamics_dim} is computationally
  complicated for large networks and, in general, it requires linearizing the system~\eqref{eq:network_dynamics_dim}
  and checking stability of a $(13n+2m)$-dimensional matrix. Theorem~\ref{thm:existence_stability}\ref{p2:stability}
  uses a time-scale analysis to eliminate the line dynamics, the dynamics of the
  current controller, PLL, and $LC$ filter from small-signal
  stability analysis. The sufficient condition in
  Theorem~\ref{thm:existence_stability}\ref{p2:stability} significantly
  reduces computational complexity by reducing the problem to checking that a $2n$-dimensional matrix is Hurwitz;

\item By part~\ref{p1:exitence}, there exists a family of
  equilibrium point $\widehat{\mathbf{x}}^{\mathrm{ref}}_{\alpha}$ for the
  grid-tied inverter network dynamics~\eqref{eq:network_dynamics_dim}. However, in
  part~\ref{p2:stability}, we focus on the equilibrium point
  $\widehat{\mathbf{x}}^{\mathrm{ref}}_{\mathrm{0}}$. The reason
  is that this trajectory is locally exponentially stable when
  there is no power injection for the inverters; 
\item Theorem~\ref{thm:existence_stability}\ref{p2:stability} brings out the role of network
  topology, the power injections/demands, and inverter parameters in small-signal stability
  of inverter networks. For the matrix $M$, the term 
  $\subscr{\widehat{Y}}{red}$ reveals the role of network topology. The terms
  $\subscr{\widehat{\mathbf{i}}^{\mathrm{ref}}}{oDQ}$,
  $\subscr{\widehat{\mathbf{v}}^{\mathrm{ref}}}{oDQ}$, and $\delta^{\mathrm{ref}}$ (obtained by solving the power-flow equations~\eqref{eq:power_flow1}--\eqref{eq:power_flow2}) reveal
  the role of network topology and power injections/demands. Finally, $\subscr{\boldsymbol{\tau}'}{s}$
  reveals the role of inverters' internal dynamics.

\end{enumerate}
\end{remark}

\subsection{Corollaries}
We now present two corollaries that may be applicable in
different settings and shed more light on to the main result. 

\begin{corollary}[Single inverter connected to the grid]
Consider a single inverter with the reference power injection
$s^{\mathrm{ref}}=(p^{\mathrm{ref}}, q^{\mathrm{ref}})^{\top}$ connected to the grid with voltage 
$\subscr{v}{gDQ} = \left(\begin{smallmatrix}0\\ \subscr{V}{g}\end{smallmatrix}\right)$ through a line with resistance $R$ and inductance $L$. If 
\begin{align}\label{eq:single_inverter}
\|s^{\mathrm{ref}}\|_2 \le \tfrac{3}{8}
  \frac{\subscr{V}{g}^2}{\sqrt{R^2+\omega_{\mathrm{nom}}^2L^2}} ,
\end{align}
then the following statements hold: 
\begin{enumerate}
\item\label{p1:existence-single} there exist two
  equilibrium points $\widehat{\mathrm{x}}^{\mathrm{ref}}_{0}$ and $\widehat{\mathrm{x}}^{\mathrm{ref}}_{1}$ satisfying 
\begin{align*}
\|\subscr{\widehat{v}^{\mathrm{ref}}}{oDQ} - \left(\begin{smallmatrix}0\\ 1\end{smallmatrix}\right)\|_{\mathbb{C},\infty}\le \tfrac{1}{2};
\end{align*}
\item\label{p2:stability-single}if we have $\subscr{\mathrm{T}}{PLL} >
\subscr{\mathrm{\tau}}{PLL}$, then there exists a $\epsilon^*>0$ such
  that, for every $\epsilon\le\epsilon^*$, 
$\widehat{\mathrm{x}}^{\mathrm{ref}}_{0}$ is locally exponentially stable.
\end{enumerate}
\end{corollary}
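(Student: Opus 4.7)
The plan is to derive this corollary as the specialization of Theorem~\ref{thm:existence_stability} to the degenerate network with one inverter ($n = 1$), no loads ($\ell = 0$), and a single transmission line ($m = 1$). Both statements should fall out by translating the theorem's hypotheses to the scalar single-inverter setting.

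First I would set up the quantities from~\eqref{eq:defins} in this geometry. With only one line of admittance $a = (R I_2 + \subscr{\omega}{nom} L \mJ)^{-1}$ and no load buses, Kron reduction is trivial: $\subscr{Y}{red} = a$ and $\subscr{Y}{g} = -a$, so $\mathbf{w}$ is parallel to $\subscr{v}{gDQ}$. The crucial structural observation is that in the single-inverter case the $2\times 2$ matrices $\mD'(\widehat{\mathbf{w}})$, $\subscr{\widehat{Y}^{-1}}{red}$, and $\mD'(\widehat{\mathbf{s}}^{\mathrm{ref}})$ all live in the commutative subalgebra $\{\alpha I_2 + \beta \mJ : \alpha, \beta \in \mathbb{R}\} \cong \mathbb{C}$, since $\mD'(\mathbf{u}) = u_1 I_2 - u_2 \mJ$ and $a^{-1} = R I_2 + \subscr{\omega}{nom} L \mJ$. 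Consequently the conjugation by $\mD'(\widehat{\mathbf{w}})$ in~\eqref{c1:existence} cancels and the left-hand side collapses to $\|\subscr{\widehat{Y}^{-1}}{red}\|_{\mathbb{C},\infty}\|\widehat{\mathbf{s}}^{\mathrm{ref}}\|_{\mathbb{C},\infty} = \bigl(\subscr{s}{nom}\sqrt{R^2 + \subscr{\omega^2}{nom} L^2}/\subscr{V}{g}^2\bigr)\bigl(\|s^{\mathrm{ref}}\|_2/\subscr{s}{nom}\bigr)$. Demanding this be at most $3/8$ is exactly~\eqref{eq:single_inverter}, and Theorem~\ref{thm:existence_stability}\ref{p1:exitence} then supplies the two equilibria indexed by $\alpha \in \mathbb{Z}_2 = \{0,1\}$ with the stated voltage bound, proving~\ref{p1:existence-single}.

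For~\ref{p2:stability-single} I would invoke Theorem~\ref{thm:existence_stability}\ref{p2:stability}. The scalar inequality $\subscr{\mathrm{T}}{PLL} > \subscr{\tau}{PLL}$ is the obvious translation of the matrix hypothesis $[\subscr{\mathbf{T}}{PLL}] \succ [\subscr{\boldsymbol{\tau}}{PLL}]$. What remains is to check that the $2\times 2$ matrix $M$ from~\eqref{c2:stability} is Hurwitz; once this is verified, the theorem produces $\epsilon^* > 0$ and the claimed local exponential stability of $\widehat{\mathrm{x}}^{\mathrm{ref}}_0$.

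The main obstacle I anticipate is exactly this Hurwitz check. The reflection $\mH$ appearing in $\mathbf{R}(-\boldsymbol{\delta}^{\mathrm{ref}})\mH$ does \emph{not} belong to the commutative subalgebra used above, so the clean cancellation from part~\ref{p1:existence-single} is unavailable. Since $M$ is only $2\times 2$, my plan is to expand it directly in the basis $\{I_2, \mJ, \mH, \mJ\mH\}$ and verify $\mathrm{tr}(M) < 0$ and $\det(M) > 0$. The bound~\eqref{eq:single_inverter} forces $\subscr{\widehat{v}^{\mathrm{ref}}}{oDQ}$ to remain close to $\widehat{\mathbf{w}}$ and $|\subscr{\widehat{i}^{\mathrm{ref}}}{oDQ}|$ to stay small (this is precisely the content of part~\ref{p1:existence-single}), so I expect the $\mD(\subscr{\widehat{v}^{\mathrm{ref}}}{oDQ})\subscr{\widehat{Y}}{red}$ term to dominate $\mD'(\subscr{\widehat{i}^{\mathrm{ref}}}{oDQ})$ and produce the required trace/determinant inequalities. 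With $M$ Hurwitz established, Theorem~\ref{thm:existence_stability}\ref{p2:stability} closes the argument.
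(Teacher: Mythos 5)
Your proposal follows essentially the same route as the paper: specialize Theorem~\ref{thm:existence_stability} to $n=1$, $\ell=0$, $m=1$, use the commutativity of $\{\alpha I_2+\beta\mJ\}$ to collapse condition~\eqref{c1:existence} to~\eqref{eq:single_inverter} for part~(i), and then check Hurwitzness of the $2\times 2$ matrix $M$ via trace and determinant for part~(ii). One caveat on your final step: the ``dominance'' heuristic you offer for why $\mathrm{tr}(M)<0$ and $\det(M)>0$ is not the actual mechanism. Since $\widehat{i}^{\mathrm{ref}}_{\mathrm{oDQ}}=\widehat{Y}_{\mathrm{red}}(\widehat{v}^{\mathrm{ref}}_{\mathrm{oDQ}}-\widehat{\mathbf{w}})$ with $\|\widehat{v}^{\mathrm{ref}}_{\mathrm{oDQ}}-\widehat{\mathbf{w}}\|\le\tfrac12$ and $\|\widehat{\mathbf{w}}\|=1$, the two summands $\mD(\widehat{v}^{\mathrm{ref}}_{\mathrm{oDQ}})\widehat{Y}_{\mathrm{red}}$ and $\mD'(\widehat{i}^{\mathrm{ref}}_{\mathrm{oDQ}})$ are of comparable size (and have determinants of opposite sign), so neither dominates; the paper instead substitutes the exact power-flow identity $\widehat{v}^{\mathrm{ref}}_{\mathrm{oDQ}}-(\widehat{R}I_2+\omega_{\mathrm{nom}}\widehat{L}\mJ)\widehat{i}^{\mathrm{ref}}_{\mathrm{oDQ}}=(0,1)^{\top}$ to obtain the closed forms $\det(M)=(2\widehat{v}^{\mathrm{ref}}_{\mathrm{oQ}}-1)(\tau'_{\mathrm{s}})^{-1}$ and $\mathrm{tr}(M)=-2\widehat{v}^{\mathrm{ref}}_{\mathrm{oq}}(\tau'_{\mathrm{s}})^{-1}$, whose signs follow from the part-(i) bound and from $\widehat{v}^{\mathrm{ref}}_{\mathrm{oq}}>0$ at the $\alpha=0$ equilibrium. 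Note also that the trace sign is exactly where the rotation $\mR(-\delta^{\mathrm{ref}})$ distinguishes $\widehat{\mathrm{x}}^{\mathrm{ref}}_0$ from $\widehat{\mathrm{x}}^{\mathrm{ref}}_1$, so your expansion must keep track of that factor. Since your plan is in any case to compute the $2\times 2$ trace and determinant explicitly, the computation will go through; only your anticipated reason for success needs correcting.
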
 
\begin{proof}
Regarding part~\ref{p1:existence-single}, we define the dimensionless
resistance $\widehat{R} = \subscr{V}{g}^{-2}\subscr{s}{nom} R$ and dimensionless inductance $\widehat{L}
= \subscr{V}{g}^{-2}\subscr{s}{nom} L$. Therefore, we get 
$\subscr{\widehat{Y}^{-1}}{red} = \begin{pmatrix}\widehat{R} & -\omega_\mathrm{nom}\widehat{L} \\ \omega_\mathrm{nom}
  \widehat{L} & \widehat{R}\end{pmatrix}$. Then,~\eqref{eq:single_inverter} is equivalent to~\eqref{c1:existence} and the result follows from
Theorem~\ref{thm:existence_stability}\ref{p1:exitence}. Regarding
part~\ref{p2:stability-single}, note that we have $\subscr{\widehat{\mathrm{v}}^{\mathrm{ref}}}{oDQ} - (\widehat{R}I_2+
  \subscr{\omega}{nom}\widehat{L}\mJ)\subscr{\widehat{\mathrm{i}}^{\mathrm{ref}}}{oDQ}
  = \left(\begin{smallmatrix}0\\1\end{smallmatrix}\right)$. 
Then, matrix $M$ in~\eqref{c2:stability} becomes: 
 \begin{align*}
   \hspace{-0.2cm}M &= -\Big(\mD(\subscr{\widehat{\mathrm{v}}^{\mathrm{ref}}}{oDQ})+\mD'(\subscr{\widehat{\mathrm{i}}^{\mathrm{ref}}}{oDQ})\subscr{\widehat{Y}}{red}^{-1}\Big) \mH\mR(-\mathrm{\delta}^{\mathrm{ref}})[\subscr{\tau'}{s}\otimes I_2]^{-1}
   \\ \hspace{-0.2cm} &=
   -\begin{pmatrix}2\widehat{R}\subscr{\widehat{i}^{\mathrm{ref}}}{oD}
     & 1+2\widehat{R}\subscr{\widehat{i}^{\mathrm{ref}}}{oQ} \\
     1+2 \subscr{\omega}{nom}\widehat{L}\subscr{\widehat{i}^{\mathrm{ref}}}{oD} & 2\subscr{\omega}{nom}\widehat{L}\subscr{\widehat{i}^{\mathrm{ref}}}{oQ} \end{pmatrix}\mH\mR(-\mathrm{\delta}^{\mathrm{ref}})(\subscr{\tau'}{s})^{-1}.
  \end{align*}
Using simple algebraic manipulations, we get $\det(M) = (2\subscr{\widehat{v}^{\mathrm{ref}}}{oQ} - 1) (\subscr{\tau'}{s})^{-1}$
and $\mathrm{tr}(M) = - 2\subscr{\widehat{v}^{\mathrm{ref}}}{oq}(\subscr{\tau'}{s})^{-1}$. Since $\|\subscr{\widehat{\mathrm{v}}^{\mathrm{ref}}}{oDQ} - \left(\begin{smallmatrix}0\\1\end{smallmatrix}\right)\|_{2}\le \tfrac{1}{2}$, we
  have $2\subscr{\widehat{v}^{\mathrm{ref}}}{oQ} \ge 1$. This implies that
  $\det(M)\ge 0$. Moreover, for the equilibrium points
  $\widehat{\mathrm{x}}^{\mathrm{ref}}_{0}$, we have
  $\subscr{\widehat{v}^{\mathrm{ref}}}{oq}>0$. This implies that 
  $\mathrm{tr}(M)<0$ and $M$ is Hurwitz. Thus, by
  Theorem~\ref{thm:existence_stability}\ref{p2:stability},
  $\widehat{\mathrm{x}}^{\mathrm{ref}}_{0}$ is locally exponentially stable.
\end{proof}

\noindent In the next corollary, we study the special case of
  resistive networks and provide a computationally efficient numerical method for checking 
  sufficient condition~\eqref{c2:stability}.
  
\begin{corollary}[Resistive network of inverters]\label{cor:no-reactive}
Consider the dynamics~\eqref{eq:network_dynamics_dim}. Suppose the
lines are purely resistive and the reference power
injections and demands are purely active. The following statements hold:
\begin{enumerate}
\item\label{p1:existence-no-react} if $\left\|[\widehat{\mathbf{u}}]\subscr{\widehat{L}^{-1}}{red}[\widehat{\mathbf{u}}]^{-1}[\widehat{\mathbf{p}}^{\mathrm{ref}}]\right\|_{\mathbb{C},\infty}\le \tfrac{3}{8}$,
where $\widehat{\mathbf{u}} =
\subscr{\widehat{L}^{-1}}{red}\subscr{\widehat{L}}{0g}\subscr{v}{gDQ}$,
then there exists a family of equilibrium points
$\widehat{\mathbf{x}}^{\mathrm{ref}}_{\alpha}$ for~\eqref{eq:network_dynamics_dim} with the property that
\begin{align*}
\left\|\subscr{\widehat{\mathbf{v}}^{\mathrm{ref}}}{oDQ}-\widehat{\mathbf{w}}\right\|_{\mathbb{C},\infty}\le
  \tfrac{1}{2}\|\widehat{\mathbf{w}}\|_{\mathbb{C},\infty};
\end{align*} 
\item\label{p2:stability-no-react}  if additionally $[\subscr{\mathbf{T}}{PLL}] \succ
[\subscr{\boldsymbol{\tau}}{PLL}]$ and the Metzler matrix 
\begin{align}\label{eq:simple_stability}
N := -[\widehat{\mathbf{v}}^{\mathrm{ref}}_{\mathrm{oq}}][\subscr{\boldsymbol{\tau}'}{s}]^{-1} + [\widehat{\mathbf{i}}^{\mathrm{ref}}_{\mathrm{oq}}]\subscr{\widehat{L}^{-1}}{red}[\subscr{\boldsymbol{\tau}'}{s}]^{-1}
\end{align}
is Hurwitz, then there exists $\epsilon^*>0$ such that for every $\epsilon\le
 \epsilon^*$, equilibrium $\widehat{\mathbf{x}}_{\mathrm{0}}^{\mathrm{ref}}$
 is locally exponentially stable. 
\end{enumerate}
\end{corollary}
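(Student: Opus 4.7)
The plan is to deduce both claims by specializing Theorem~\ref{thm:existence_stability} to the resistive, active-power setting and showing that the abstract hypotheses collapse to the claimed scalar (respectively, Metzler) conditions.

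For Part~\ref{p1:existence-no-react}, I would first exploit the fact that purely resistive lines make every edge admittance a real scalar multiple of $I_2$. Consequently the reduced admittance factors as $\widehat{Y}_{\mathrm{red}} = \widehat{L}_{\mathrm{red}} \otimes I_2$ and $\widehat{Y}_{0g} = \widehat{L}_{0g}\otimes I_2$, with $\widehat{L}_{\mathrm{red}}$ a real matrix obtained by Kron reduction. Then $\widehat{\mathbf{w}}$ is componentwise aligned with $\widehat{v}_{\mathrm{gDQ}}$, so $\mD'(\widehat{\mathbf{w}})$ acts as a block-diagonal rotation-like matrix, and conjugation of $\widehat{Y}_{\mathrm{red}}^{-1}$ by $\mD'(\widehat{\mathbf{w}})$ reduces to the per-inverter scalar rescaling $[\widehat{\mathbf{u}}]\widehat{L}_{\mathrm{red}}^{-1}[\widehat{\mathbf{u}}]^{-1}$. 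Because the reactive references vanish, $\mD'(\widehat{\mathbf{s}}^{\mathrm{ref}})$ is supported on the same real block, and the mixed $\mathbb{C},\infty$-norm in~\eqref{c1:existence} collapses to the hypothesis of the corollary. Theorem~\ref{thm:existence_stability}\ref{p1:exitence} then yields both the equilibrium family and the localization estimate.

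For Part~\ref{p2:stability-no-react}, the task is to show that the $2n\times 2n$ matrix $M$ in~\eqref{c2:stability} is Hurwitz. The PLL equilibrium condition forces $\widehat{\mathbf{v}}^{\mathrm{ref}}_{\mathrm{od}}=\vect{0}_n$, and purely active power with $\widehat{\mathbf{v}}^{\mathrm{ref}}_{\mathrm{oq}}>\vect{0}_n$ forces $\widehat{\mathbf{i}}^{\mathrm{ref}}_{\mathrm{od}}=\vect{0}_n$; thus both equilibrium vectors are supported on the rotated Q-axis of each inverter. After transporting through $\mathbf{R}(-\boldsymbol{\delta}^{\mathrm{ref}})\mH$ and using the scalar-Kronecker structure of $\widehat{Y}_{\mathrm{red}}$ (and the corresponding block structure of $\widehat{Y}_{C,\mathrm{red}}^{-1}$), one finds that $M$ becomes block-triangular with respect to the D/Q splitting, and the two $n\times n$ diagonal blocks evaluate to $-[\widehat{\mathbf{v}}^{\mathrm{ref}}_{\mathrm{oq}}][\subscr{\boldsymbol{\tau}'}{s}]^{-1}$ and precisely the matrix $N$ of~\eqref{eq:simple_stability}, respectively. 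The first block is a negative diagonal matrix, hence Hurwitz, and $N$ is Hurwitz by hypothesis; therefore $M$ is Hurwitz, and Theorem~\ref{thm:existence_stability}\ref{p2:stability} gives local exponential stability of $\widehat{\mathbf{x}}^{\mathrm{ref}}_{\mathbf{0}}$.

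The main obstacle is the block-triangularization of $M$: one must meticulously keep track of how the rotation $\mathbf{R}(-\boldsymbol{\delta}^{\mathrm{ref}})$, the swap $\mH$, the operators $\mD$ and $\mD'$, and the resistive factorization of $\widehat{Y}_{\mathrm{red}}$ interact so that all cross-coupling terms between the D and Q subspaces cancel. A secondary point worth verifying is that $N$ is indeed Metzler: this reduces to checking entrywise nonnegativity of $\widehat{L}_{\mathrm{red}}^{-1}$, which follows because Kron reduction of a Laplacian preserves the Stieltjes $M$-matrix property (\emph{cf.}~\cite{FD-FB:11d}), combined with $\widehat{\mathbf{i}}^{\mathrm{ref}}_{\mathrm{oq}}\ge \vect{0}_n$ under normal operating conditions. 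Once Metzler, Hurwitzness of $N$ is certifiable by the linear program referenced in~\cite{AR:15}, delivering the advertised computational efficiency.
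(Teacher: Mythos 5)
Part~\ref{p1:existence-no-react} of your proposal is correct and follows the paper's route: the resistive/active assumptions give $\subscr{\widehat{Y}}{red}=\subscr{\widehat{L}}{red}\otimes I_2$, $\widehat{\mathbf{s}}^{\mathrm{ref}}=\widehat{\mathbf{p}}^{\mathrm{ref}}\otimes(1,0)^{\top}$, and an aligned $\widehat{\mathbf{w}}$, so condition~\eqref{c1:existence} collapses to the stated scalar condition and Theorem~\ref{thm:existence_stability}\ref{p1:exitence} applies. Your observation that $\boldsymbol{\delta}^{\mathrm{ref}}=\vect{0}_n$ and $\subscr{\widehat{\mathbf{i}}^{\mathrm{ref}}}{od}=\vect{0}_n$, and your justification of the Metzler property of $N$ via nonnegativity of $\subscr{\widehat{L}^{-1}}{red}$, also match the paper.

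However, Part~\ref{p2:stability-no-react} has a genuine gap: you misidentify one of the two decoupled blocks of $M$. Working out the algebra with $\widehat{\mathbf{v}}^{\mathrm{ref}}_{\mathrm{oDQ}}=\widehat{\mathbf{v}}^{\mathrm{ref}}_{\mathrm{oq}}\otimes(0,1)^{\top}$ one finds $\mD(\widehat{\mathbf{v}}^{\mathrm{ref}}_{\mathrm{oDQ}})=[\widehat{\mathbf{v}}^{\mathrm{ref}}_{\mathrm{oq}}]\otimes\mH$ and $\mD'(\widehat{\mathbf{i}}^{\mathrm{ref}}_{\mathrm{oDQ}})=-[\widehat{\mathbf{i}}^{\mathrm{ref}}_{\mathrm{oq}}]\otimes\mJ$; after right-multiplication by $\mH$ the first term contributes $[\widehat{\mathbf{v}}^{\mathrm{ref}}_{\mathrm{oq}}]\otimes I_2$ (i.e., it appears in \emph{both} diagonal blocks), while the second contributes through $\mJ\mH=\mathrm{diag}(-1,1)$ with opposite signs in the two blocks. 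Hence $M$ decouples into
\begin{equation*}
-\bigl([\widehat{\mathbf{v}}^{\mathrm{ref}}_{\mathrm{oq}}]+[\widehat{\mathbf{i}}^{\mathrm{ref}}_{\mathrm{oq}}]\subscr{\widehat{L}^{-1}}{red}\bigr)[\subscr{\boldsymbol{\tau}'}{s}]^{-1}
\quad\text{and}\quad
-\bigl([\widehat{\mathbf{v}}^{\mathrm{ref}}_{\mathrm{oq}}]-[\widehat{\mathbf{i}}^{\mathrm{ref}}_{\mathrm{oq}}]\subscr{\widehat{L}^{-1}}{red}\bigr)[\subscr{\boldsymbol{\tau}'}{s}]^{-1}=N,
\end{equation*}
not into $-[\widehat{\mathbf{v}}^{\mathrm{ref}}_{\mathrm{oq}}][\subscr{\boldsymbol{\tau}'}{s}]^{-1}$ and $N$ as you assert. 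The first block is a full (non-diagonal) matrix containing $\subscr{\widehat{L}^{-1}}{red}$, so it is not ``trivially Hurwitz,'' and your argument never establishes its stability; without that, Hurwitzness of $N$ alone does not give Hurwitzness of $M$. The paper closes exactly this step: it conjugates the ``$+$'' block by $[\subscr{\boldsymbol{\tau}'}{s}]^{-1/2}$ and $[\widehat{\mathbf{i}}^{\mathrm{ref}}_{\mathrm{oq}}]^{1/2}$ to exhibit it as the sum of a symmetric negative definite matrix (from $[\widehat{\mathbf{v}}^{\mathrm{ref}}_{\mathrm{oq}}]\succ 0$) and a symmetric negative semidefinite matrix (from $[\widehat{\mathbf{i}}^{\mathrm{ref}}_{\mathrm{oq}}]^{1/2}\subscr{\widehat{L}^{-1}}{red}[\widehat{\mathbf{i}}^{\mathrm{ref}}_{\mathrm{oq}}]^{1/2}\succeq 0$), hence Hurwitz unconditionally. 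You need to add this symmetrization argument (or an equivalent one) for your proof to be complete; the rest of your outline is sound.
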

\begin{proof}
Regarding part~\ref{p1:existence-no-react}, since the network is
resistive and power injections/demands are purely active, we know that
\begin{align*}
\subscr{\widehat{Y}^{-1}}{red} = \subscr{\widehat{L}^{-1}}{red}
  \otimes I_2,\quad \widehat{\mathbf{s}}^{\mathrm{ref}} = \widehat{\mathbf{p}}^{\mathrm{ref}}\otimes \left(\begin{smallmatrix}1\\0\end{smallmatrix}\right) \quad \widehat{\mathbf{w}} = \widehat{\mathbf{u}}\otimes I_2.
\end{align*}
Therefore: $\left\|\mD'(\widehat{\mathbf{w}})\subscr{\widehat{Y}^{-1}}{red}(\mD'(\widehat{\mathbf{w}}))^{-1}\mathrm{D'}({\mathbf{s}}^{\mathrm{ref}})\right\|_{\mathbb{C},\infty}
= \left\|[\widehat{\mathbf{u}}]\subscr{\widehat{L}^{-1}}{red}[\widehat{\mathbf{u}}]^{-1}[\widehat{\mathbf{p}}^*]\right\|_{\mathbb{C},\infty}$. The result then follows from Theorem~\ref{thm:existence_stability}\ref{p1:exitence}. Regarding
part~\ref{p2:stability-no-react}, since there are no reactive-power injections from the inverters, we have
$\subscr{\widehat{\mathbf{i}}^{\mathrm{ref}}}{od}=\vect{0}_n$. Since the input voltage for the PLL and power controller is the
\emph{output} voltage of the $LC$ filter (i.e.,
$\subscr{\widehat{\mathbf{v}}^{\mathrm{ref}}}{odq}$), and the network is purely
resistive, we have $\subscr{\widehat{\mathbf{v}}^{\mathrm{ref}}}{oD} =\vect{0}_n$ and as a result
$\boldsymbol{\delta}^{\mathrm{ref}}=\vect{0}_n$. Alternatively, this observation
can be proved rigorously as follows. Since
the power injections are purely active, the power injection vector
$\mathbf{s}^{\mathrm{ref}}$ has the form $\mathbf{s}^{\mathrm{ref}} =
\mathbf{p}^{\mathrm{ref}}\otimes \left(\begin{smallmatrix}1\\0\end{smallmatrix}\right)$. Therefore, starting from the initial
condition $\mathbf{v}^{(0)}=
\widehat{\mathbf{w}}=\widehat{\mathbf{u}}\otimes
\left(\begin{smallmatrix}0\\1\end{smallmatrix}\right)$, the $k$th iteration in
Lemma~\ref{lem:existence_algorithm}\ref{p2:power_flow_algorithm} has the form $\mathbf{v}^{(k)} =
\mathbf{u}^{(k)}\otimes
\left(\begin{smallmatrix}0\\1\end{smallmatrix}\right)$, for every
integer $k\in \mathbb{Z}_{\ge 0}$. This implies that, in the
limit, we have $\subscr{\widehat{\mathbf{v}}^{\mathrm{ref}}}{oD} =\vect{0}_n$ and as
a result $\boldsymbol{\delta}^{\mathrm{ref}}=\vect{0}_n$. Therefore, the matrix $M$ in 
condition~\eqref{c2:stability} simplifies as shown below:
\begin{align*}
  M & = - \Big([\widehat{\mathbf{v}}^{\mathrm{ref}}_{\mathrm{oq}}]\otimes I_2 -
  [\widehat{\mathbf{i}}^{\mathrm{ref}}_{\mathrm{oq}}]\subscr{\widehat{L}^{-1}}{red}\otimes
  I_2\Big) [\subscr{\boldsymbol{\tau}'}{s}\otimes I_2]^{-1} \\ & =  - [\widehat{\mathbf{v}}^{\mathrm{ref}}_{\mathrm{oq}}][\subscr{\boldsymbol{\tau}'}{s}]^{-1}\otimes I_2 -
  [\widehat{\mathbf{i}}^{\mathrm{ref}}_{\mathrm{oq}}]\subscr{\widehat{L}^{-1}}{red}[\subscr{\boldsymbol{\tau}'}{s}]^{-1}\otimes
  I_2.
\end{align*}
Using Lemma~\ref{lem:tensor_product}, matrix $M$ is Hurwitz if and only if matrices 
\begin{align}\label{eq:matrix_equiv}
-\big([\widehat{\mathbf{v}}^{\mathrm{ref}}_{\mathrm{oq}}]\pm [\widehat{\mathbf{i}}^{\mathrm{ref}}_{\mathrm{oq}}]\subscr{\widehat{L}^{-1}}{red}\big) [\subscr{\boldsymbol{\tau}'}{s}]^{-1}
\end{align}
are Hurwitz. Note that the active-power injections from the inverters to the grid are non-negative in steady state. This implies that $\widehat{\mathbf{i}}^{\mathrm{ref}}_{\mathrm{oq}}\ge \vect{0}_n$. Thus, the
matrices~\eqref{eq:matrix_equiv} are similar to the following matrices: 
\begin{align*}
-[\subscr{\boldsymbol{\tau}'}{s}]^{\frac{-1}{2}}[\widehat{\mathbf{v}}^{\mathrm{ref}}_{\mathrm{oq}}][\subscr{\boldsymbol{\tau}'}{s}]^{\frac{-1}{2}}\pm [\subscr{\boldsymbol{\tau}'}{s}]^{\frac{-1}{2}}[\widehat{\mathbf{i}}^{\mathrm{ref}}_{\mathrm{oq}}]^{\frac{1}{2}}\subscr{\widehat{L}^{-1}}{red}[\widehat{\mathbf{i}}^{\mathrm{ref}}_{\mathrm{oq}}]^{\frac{1}{2}}[\subscr{\boldsymbol{\tau}'}{s}]^{\frac{-1}{2}}.
\end{align*}
Since the matrix $[\widehat{\mathbf{v}}^{\mathrm{ref}}_{\mathrm{oq}}]$ is positive definite
and the matrix $[\widehat{\mathbf{i}}^{\mathrm{ref}}_{\mathrm{oq}}]^{\frac{1}{2}}\subscr{\widehat{L}^{-1}}{red}[\widehat{\mathbf{i}}^{\mathrm{ref}}_{\mathrm{oq}}]^{\frac{1}{2}}$
is positive semidefinite, the matrix
\begin{align*}
-[\subscr{\boldsymbol{\tau}'}{s}]^{\frac{-1}{2}}[\widehat{\mathbf{v}}^{\mathrm{ref}}_{\mathrm{oq}}][\subscr{\boldsymbol{\tau}'}{s}]^{\frac{1}{2}} - [\subscr{\boldsymbol{\tau}'}{s}]^{\frac{1}{2}}[\widehat{\mathbf{i}}^{\mathrm{ref}}_{\mathrm{oq}}]^{\frac{1}{2}}\subscr{\widehat{L}^{-1}}{red}[\widehat{\mathbf{i}}^{\mathrm{ref}}_{\mathrm{oq}}]^{\frac{1}{2}}[\subscr{\boldsymbol{\tau}'}{s}]^{\frac{-1}{2}}
\end{align*}
is Hurwitz. Moreover, note that the matrix $\subscr{L}{red}$ is a
   grounded Laplacian matrix and by~\cite[E 9.10]{FB:19} its inverse
   $\subscr{L}{red}^{-1}$ is non-negative. Also, the matrices
   $[\subscr{\boldsymbol{\tau}'}{s}]$,
   $[\widehat{\mathbf{i}}^{\mathrm{ref}}_{\mathrm{oq}}]$, and
   $[\widehat{\mathbf{v}}^{\mathrm{ref}}_{\mathrm{oq}}]$ are all diagonal with
   non-negative diagonal elements. This implies that the matrix $
-[\widehat{\mathbf{v}}^{\mathrm{ref}}_{\mathrm{oq}}][\subscr{\boldsymbol{\tau}'}{s}]^{-1} + [\widehat{\mathbf{i}}^{\mathrm{ref}}_{\mathrm{oq}}]\subscr{\widehat{L}^{-1}}{red}[\subscr{\boldsymbol{\tau}'}{s}]^{-1}$
has non-negative off-diagonal elements and therefore, it is
Metzler. The proof of Corollary~\eqref{cor:no-reactive} then follows
from Theorem~\ref{thm:existence_stability}.
\end{proof}

\begin{remark}[Computational complexity] There are computationally efficient methods for checking Hurwitzness of Metzler matrices. In particular, one can reformulate the Hurwitzness of~\eqref{eq:simple_stability} as the following
feasibility problem:
\begin{align}\label{eq:linear-programming}
N \xi < 0, \quad \xi > 0.
\end{align}
The feasibility problem~\eqref{eq:linear-programming} is a linear
program and can be checked using distributed methods whose computational time scales linearly with the number of non-zero elements in $N$~\cite{AR:15}.  
\end{remark}

\section{Numerical Simulations}\label{sec:numerical}

In this section, we present numerical simulation results for radial
networks (see Fig.~\ref{fig:inverter}) with identical inverters and reference-power setpoints with the goal of answering the following questions:\footnote{All the numerical simulations are
    performed in
  \textrm{MATLAB R2016a} on a computer with Intel Core i5
  processor $@ \ 1.6 \ \mathrm{GHZ}$ \textrm{CPU} and $4 \ \mathrm{GB}$ \textrm{RAM}.}
\begin{itemize}
\item\label{q:design} For which values of the inverter parameter $\subscr{\epsilon}{I}$ does
  Theorem~\ref{thm:existence_stability}\ref{p2:stability} guarantee
  small-signal stability? 
\item\label{q:analysis} How efficient is the
  condition~\eqref{c2:stability} in
  Theorem~\ref{thm:existence_stability}\ref{p2:stability} to analyze small-signal stability? 
\end{itemize}
The first question can be interpreted as an \emph{inverter design} problem, while the second question can be interpreted as a \emph{network monitoring} problem. Solutions to these problems are provided in Sections~\ref{sec:design} and~\ref{sec:analysis}, respectively. Our test case is a family of radial networks $\{G(n)\}_{n\in \mathbb{N}}$, where $G(n)$ is
the weighted undirected connected graph with the node set (buses)
$\mathcal{N}_n=\{0,\ldots,n\}$ and the edge set (branches)
$\mathcal{E}_n=\{(i,i+1)\mid i=0,\ldots,n-1\}$ (see Fig.~\ref{fig:inverter}). For every radial network
$G(n)$, the node $0$ is the slack bus connected to the grid with voltage $\subscr{v}{g} = \imagunit
(120\sqrt{2}) \ \mathrm{V(peak)}$ and frequency
$\subscr{\omega}{nom}=120\pi \ \mathrm{rad/s}$. Nodes $\{1,\ldots,n\}$ are the inverters
with $\subscr{s}{nom}=1000\ \mathrm{VA}$, and
\begin{align*}
\subscr{\tau}{PLL} &=\subscr{\epsilon^2}{I}, \quad \subscr{\tau'}{PLL}
  =\subscr{\epsilon^2}{I}{\subscr{V^{-1}}{g}},\quad \subscr{\mathrm{T}}{PLL}
  = \subscr{\epsilon}{I}, \quad  \subscr{\tau}{s} =\subscr{\epsilon}{I}\\ \subscr{\tau'}{s}
  &=(0.1) \subscr{V}{g}, \quad \subscr{\mathrm{T}}{s} = 10 \subscr{\epsilon}{I},\quad \subscr{\tau}{c} =
  \frac{\subscr{V}{g}}{\subscr{s}{nom}}\subscr{\epsilon^2}{I},\quad
  \subscr{\mathrm{T}}{c}=\subscr{\epsilon^2}{I}\\ \subscr{L}{f} &=10^{-3}
                                                                  \; \mathrm{H},
  \quad \subscr{C}{f} = 2\times 10^{-3} \; \mathrm{F}.
\end{align*}
For every $(j,k)\in \mathcal{E}_n$, the admittance of line $(i,j)$ is given by $a_{jk}=a_{kj} = (R I_2+ \omega_\mathrm{nom}
L\mJ)^{-1}$, where the line
resistance is $R = 0.02\ \Omega$ and line inductance is $L = 2 \times 10^{-5}\ \mathrm{
H}$.
We first define the notion of a \emph{safe penetration level}. 
\begin{definition}[Safe Penetration Level]
Given a family of networks $\{G(n)\}_{n\in \mathbb{N}}$ and uniform reference
power injection $s^{\mathrm{ref}}\in \mathbb{R}^2$, the Safe Penetration Level (SPL) for $\{G(n)\}_{n\in \mathbb{N}}$ is the maximum $n\in \mathbb{N}$ such
  that the dimensionless grid-tied inverter network dynamics~\eqref{eq:network_dynamics_dim} with
  underlying graph $G(n)$ and reference powers $\mathbf{s}^{\mathrm{ref}}= s^{\mathrm{ref}}\otimes\vect{1}_{n}$ has a locally stable equilibrium point.
\end{definition} 
\begin{remark}
\begin{enumerate}[label=({\arabic*})]
\item For our test case network, SPL of the network depends on the
  parameter $\subscr{\epsilon}{I}$;
\item One can use the matrix $M$ defined in~\eqref{c2:stability} to estimate
  the SPL of the network; from Theorem~\ref{thm:existence_stability}\ref{p2:stability},
  there exists $\epsilon^*>0$ such that, for every $\epsilon\le
  \epsilon^*$, SPL of the network is larger than this estimate.
\end{enumerate}
\end{remark}

\subsection{Designing Grid-tied Inverter Networks}\label{sec:design}
In this part, we examine the efficiency of our analytic results in
Theorem~\ref{thm:existence_stability} to design grid-following
inverter networks focusing on the small-signal stability of the grid. In particular, we focus on estimating the SPL for the radial network shown in
Fig~\ref{fig:inverter} and numerically computing the largest
range of parameter $\subscr{\epsilon}{I}$ for which Theorem~\ref{thm:existence_stability}\ref{p2:stability} holds. In order to carry out this task, we study the effect of parameter
$\subscr{\epsilon}{I}$ for different active power injections on the SPL of the
system. The result is shown in Fig.~\ref{fig:increase}. 
\begin{figure}[t!]
  \centering
 \includegraphics[scale=0.35]{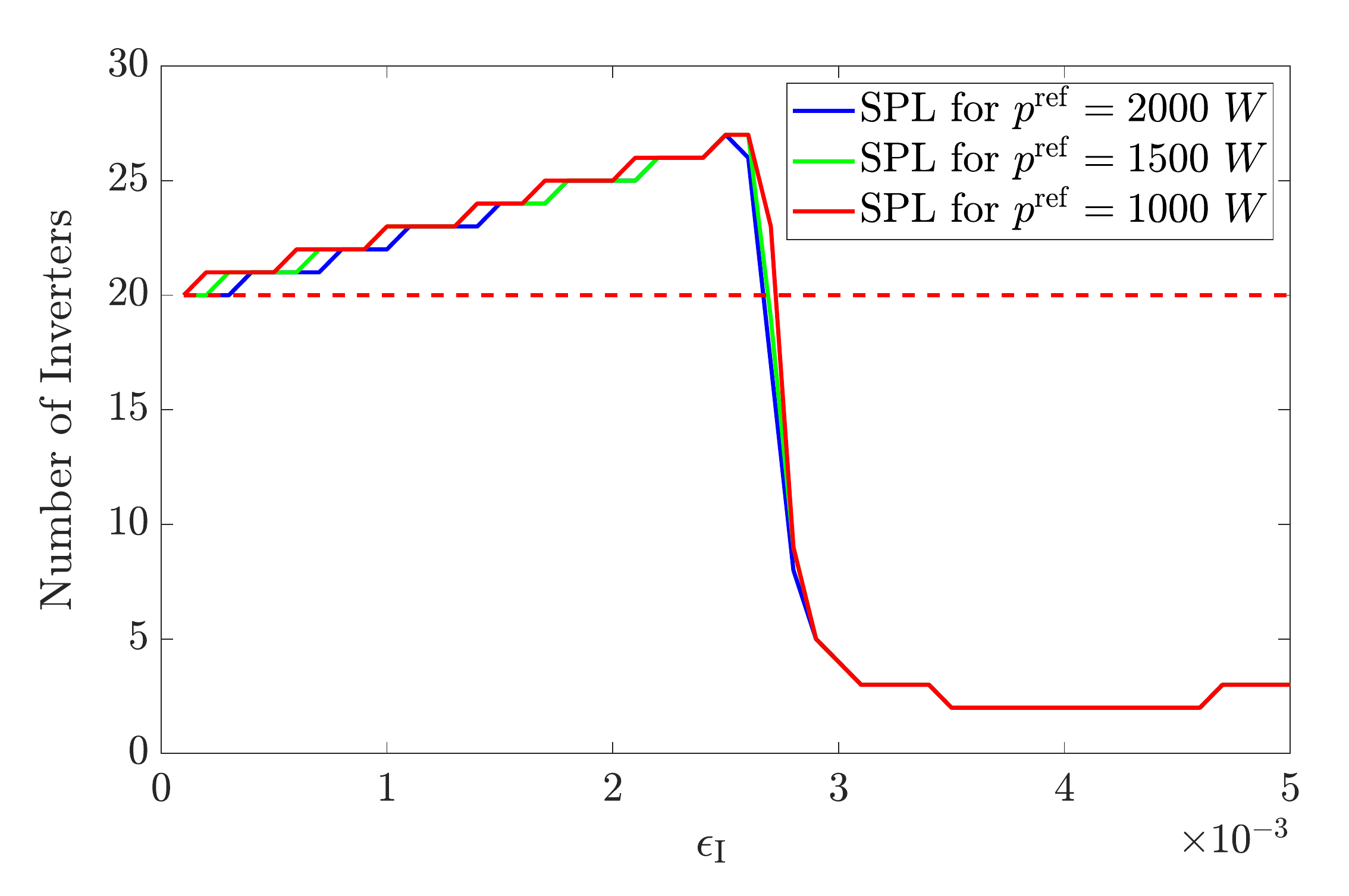}
\caption{Safe Penetration Levels for different active power injections. The dashed lines
  show the estimates of SPL computed using Hurwitzness of matrix $M$.}
\label{fig:increase}
\end{figure}
Note that the overlapping dashed lines in Fig.~\ref{fig:increase}
  are the estimates of SPL based on the sufficient condition in
  Theorem~\ref{thm:existence_stability}\ref{p2:stability}. Therefore,
  according to the data in Fig.~\ref{fig:increase}, the largest
  domain of the inverter parameter $\subscr{\epsilon}{I}$ for which 
  Theorem~\ref{thm:existence_stability}\ref{p2:stability} holds is
  $(0, 0.0025]$, for the active power
  injections $p^{\mathrm{ref}}=1000\ W$, $p^{\mathrm{ref}}=1500\ W$, and $p^{\mathrm{ref}}=2000\ W$,
  respectively. It is interesting that, as $\subscr{\epsilon}{I}$
  becomes smaller, the dashed
  line and the solid lines get closer to each other and
  Theorem~\ref{thm:existence_stability}\ref{p2:stability} can be used
  to find the exact SPL of the network.

\subsection{Monitoring Grid-tied Inverter Networks}\label{sec:analysis}

In this section, we study the accuracy and computational efficiency of
condition~\eqref{c2:stability} in Theorem~\ref{thm:existence_stability}\ref{p2:stability} for monitoring the small-signal stability of
the network. For our test case, we pick the parameter $\subscr{\epsilon}{I}=0.001$. Therefore based on the discussion in 
Section~\ref{sec:design}, we are in the range of applicability of
Theorem~\ref{thm:existence_stability}\ref{p2:stability}. Recall that
SPL is the largest number of inverters in the network for which the equilibrium
point $\widehat{\mathbf{x}}^{\mathrm{ref}}_{0}$ of full-order 
system~\eqref{eq:network_dynamics_dim} is locally asymptotically stable. We
denote the computational time of finding SPL, using the eigenvalue
analysis for the linearized system, by $\subscr{\mathrm{T}}{lin}$. We
denote the largest number of inverters in the grid for which the matrix $M$ in~\eqref{c2:stability} is Hurwitz
by $\mathrm{SPL}_{\mathrm{test}}$.  Similarly, we denote the computational time for checking the Hurwitzness of matrix $M$ in~\eqref{c2:stability} by
$\subscr{\mathrm{T}}{test}$. Finally, we denote the largest number of inverters in the grid for which condition~\eqref{eq:power_flow_stability} holds, by $\mathrm{SPL}_{\mathrm{static}}$. We start with different uniform reference active-power injections $\widehat{\mathrm{s}}^{\mathrm{ref}} =
(\widehat{p},0)^{\top}\otimes\vect{1}_n$ and compute the thresholds $\mathrm{SPL}$, $\mathrm{SPL}_{\mathrm{test}}$, and $\mathrm{SPL}_{\mathrm{static}}$ together with the computational times for $\mathrm{SPL}$ and $\mathrm{SPL}_{\mathrm{test}}$, for each active-power injections. The results are shown in
Table~\ref{tab:stability_threshold_computational_time}. From Table~\ref{tab:stability_threshold_computational_time}, one can see that the static condition~\eqref{eq:power_flow_stability} overestimates the value of $\mathrm{SPL}$. Moreover, it
is clear that condition~\eqref{c2:stability} gives
an accurate lower bound for safe penetration level and its corresponding
computation time (i.e., $\subscr{\mathrm{T}}{test}$) is almost one order of magnitude less that the computation time to perform eigenvalue analysis for the  full-order system (i.e., $\subscr{\mathrm{T}}{lin}$).

\begin{table}[htb] 
\centering
\resizebox{0.9\linewidth}{!}{\begin{tabular}{|c|c|c|c|c|c|}
\hline
$\widehat{p}$ & $\subscr{\mathrm{T}}{lin}\;(\mathrm{s})$& $\subscr{\mathrm{T}}{test} \;(\mathrm{s})$ &
                                                                     $\mathrm{SPL}$ & $\mathrm{SPL}_{\mathrm{test}}$ & $\mathrm{SPL}_{\mathrm{static}}$
                            \\
\hline
\rowcolor{LightGray}
0.80 & 0.5853  & 0.0774  &  22 & 20 & 35\\
\hline
1.00 & 0.5636   & 0.0793  & 22 & 20 & 31\\
\hline
\rowcolor{LightGray}
1.20 & 0.5546 & 0.0741  &  22 & 20 & 28\\
\hline
1.40& 0.6525 & 0.0767    & 22 & 20 & 26\\
\hline
\rowcolor{LightGray}
1.60 & 0.0767 & 0.5313 & 21 & 20 & 24\\
\hline
1.80  & 0.5063 &  0.0888 & 21 & 20 & 23\\
\hline
\rowcolor{LightGray}
2.00 & 0.5241 & 0.0766  & 21 & 20 & 22\\
\hline
\end{tabular}}
\caption{Comparing the computation time and accuracy of
  conditions~\eqref{c2:stability} and~\eqref{eq:power_flow_stability} for small-signal
  stability. The unit of the quantities $\subscr{\mathrm{T}}{test}$ and
  $\subscr{\mathrm{T}}{lin}$ are seconds, the quantity
  of $\widehat{p}$ is dimensionless and the quantities
  $\mathrm{SPL}$ and $\mathrm{SPL}_{\mathrm{test}}$ and $\mathrm{SPL}_{\mathrm{static}}$ are integers.}
\label{tab:stability_threshold_computational_time}
\end{table}

\section{Conclusion}

We studied small-signal stability of grid-tied networks of grid-following
inverters and loads. Using a time-scale analysis and a
suitable choice of a family of parameters for the inverters, we presented
an analytic sufficient condition for local exponential stability. We
showed that, compared to the direct eigenvalue analysis of the full-order system, this sufficient condition has the
advantages of reducing the computational complexity of checking
small-signal stability as well as providing insights about the role of
the network topology and inverter parameters on stability.

\appendix

\section{Table of variables and parameters}\label{app:parameters}

Table~\eqref{tab:variables} collects the variables and their symbols
for grid-following inverter model and Table \eqref{tab:parameters} collects the parameter values for this class
of inverters. 

\begin{table}[h]\centering
  \resizebox{0.99\linewidth}{!}{\begin{tabular}{| p{30mm} | p{9mm} | p{30mm} | p{9mm} |}
      \hline
Variable & Symbol & Variable & Symbol \\ 
\hline
\rowcolor{LightGray}
PLL low-pass filter state & $\subscr{v}{PLL}$ & Current controller
                                                auxiliary state& $\subscr{\gamma}{dq}$\\
\hline
PLL PI controller state &  $\subscr{\phi}{PLL}$ & Current controller
                                                  output voltage & $\subscr{v}{idq}$\\
\hline
 \rowcolor{LightGray}
PLL phase output & $\delta$ & Output current & $\subscr{i}{odq}$\\
\hline 
Power controller low-pass filter state& $\subscr{s}{ave}$ & Power
                                                            controller
                                  auxiliary state & $\subscr{\phi}{s}$\\
\hline 
 \rowcolor{LightGray}
Reference power injection & $s^{\mathrm{ref}}$ & Current controller reference current & $\subscr{i}{ldq}$\\
\hline 
Power controller auxiliary state & $\subscr{\phi}{s}$ & PLL Frequency & $\subscr{\omega}{PLL}$\\
                                  \hline
                                   \rowcolor{LightGray}
                                  Current in the lines &
                                                         $\subscr{\xi}{DQ}$
                                  & LC-filter voltage &
                                                               $\subscr{v}{odq}$\\
                                  \hline
                                  Grid voltage  &
                                                         $\subscr{v}{gDQ}$
                                  & Grid frequency &
                                                               $\subscr{\omega}{nom}$\\
                                  \hline
\end{tabular}}
\caption{Variables and their symbols for the inverter model.}\label{tab:variables}
\end{table}

\begin{table}[h]\centering
  \resizebox{0.99\linewidth}{!}{\begin{tabular}{| p{40mm} | p{9mm} | p{16mm} | p{16mm} |}
      \hline
Parameter & Symbol & Values from~\cite{MR-JAM-JWK:15} & Values
                                                       from ~\cite{NP-MP-TCG:07} \\
                                                       \hline
\rowcolor{LightGray}
Grid frequency & $\subscr{\omega}{nom}$ & $377$ rad/s& $377$ rad/s\\
\hline
Grid voltage amplitude &  $\subscr{V}{g}$ & $169$ V& $169$ V\\
\hline
 \rowcolor{LightGray}
PLL time constant & $\subscr{\tau}{PLL}$ & $1.27\mathrm{e}{-5}$
                                     & $1.27\mathrm{e}{-5}$\\
\hline 
PLL time constant & $\subscr{\tau'}{PLL}$ & $2.36\mathrm{e}{-2}$
                                     & $4.7\mathrm{e}{-3}$\\
\hline 
 \rowcolor{LightGray}
PLL time constant & $\subscr{T}{PLL}$ & $1.25\mathrm{e}{-1}$
                                     & $1.25\mathrm{e}{-1}$\\
\hline 
Power controller time constant& $\subscr{\tau}{s}$ &
                                                             $1.99\mathrm{e}{-2}$& $1.99\mathrm{e}{-2}$\\
\hline
\rowcolor{LightGray} 
Power controller time constant& $\subscr{\tau'}{s}$ & $16.97$& $16.97$\\
\hline
Power controller time constant & $\subscr{T}{s}$
                   &  $1.00\mathrm{e}{-1}$ & $1.00\mathrm{e}{-1}$\\
\hline 
 \rowcolor{LightGray}
Current controller time constant & $\subscr{\tau}{c}$ & $1.70\mathrm{e}{-3}$ &
                                                                    $7.85\mathrm{e}{-4}$\\
\hline
Current controller time constant & $\subscr{T}{c}$ & $1.00\mathrm{e}{-2}$& $ 1.43\mathrm{e}{-3}$\\
\hline
 \rowcolor{LightGray}
LC time constant & $\subscr{\tau}{LC}$ & $2.37\mathrm{e}{-5}$ & $2.54\mathrm{e}{-5}$ \\
\hline
LC time constant & $\subscr{\tau'}{LC}$ & $2.37\mathrm{e}{-5}$ &
                                                                     $2.54\mathrm{e}{-5}$\\
                                  \hline
                                   \rowcolor{LightGray}
Line time constant & $\subscr{\tau}{e}$ & $1.90\mathrm{e}{-3}$ & $2.70\mathrm{e}{-3}$ \\
                                  \hline
                                  Line time constant & $\subscr{\tau'}{e}$ & $7.40\mathrm{e}{-3}$ & $1.31\mathrm{e}{-0}$ \\
\hline
\end{tabular}}
\caption{Dimensionless parameters of the inverter model.}\label{tab:parameters}
\end{table}

\section{Pertinent Results from Linear Algebra}\label{app:lemma}
\begin{lemma}\label{lem:tensor_product}
Let $\eta_1,\ldots,\eta_m$ be the eigenvalues of a matrix $C\in \mathbb{C}^{m\times m}$. For  $A,B\in \mathbb{C}^{n\times n}$, 
$\lambda$ is an eigenvalue of $A\otimes I_m + B \otimes C$ if and only if
it is an eigenvalue of $A + \eta_k B$, for some $k\in \{1,\ldots,m\}$.
\end{lemma}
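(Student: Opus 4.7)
The plan is to reduce the problem to a block-triangular form via a Schur triangularization of $C$, which immediately exposes the eigenvalues as the union of spectra of the matrices $A + \eta_k B$.

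First, I would invoke the Schur decomposition: write $C = U T U^*$, where $U\in \mathbb{C}^{m\times m}$ is unitary and $T\in \mathbb{C}^{m\times m}$ is upper triangular with diagonal entries $\eta_1,\ldots,\eta_m$. Then, using the standard identities $(X_1\otimes Y_1)(X_2\otimes Y_2) = (X_1 X_2)\otimes (Y_1 Y_2)$ and $I_n \otimes I_m = I_{nm}$, I would verify the similarity
\begin{align*}
A\otimes I_m + B\otimes C
&= (I_n\otimes U)\bigl(A\otimes I_m + B\otimes T\bigr)(I_n\otimes U^*),
\end{align*}
which reduces the problem to analyzing $A\otimes I_m + B\otimes T$, since similar matrices share spectra.

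Next, I would exploit the block structure of $A\otimes I_m + B\otimes T$ in the natural $n\times n$-block partition (with $m\times m$ blocks, i.e.\ viewing the Kronecker product as $[A]_{ij} I_m + [B]_{ij} T$ reorganized appropriately; alternatively, partitioning into $m\times m$ blocks of size $n\times n$ using the block $(i,j)$ equal to $\delta_{ij} A + t_{ij} B$). In this partition, the matrix is block upper triangular with diagonal blocks $A + t_{ii} B = A + \eta_i B$ and zero strictly lower-triangular blocks. The classical fact that the spectrum of a block upper-triangular matrix is the union of the spectra of its diagonal blocks then yields
\begin{align*}
\mathrm{spec}(A\otimes I_m + B\otimes C) = \bigcup_{k=1}^{m} \mathrm{spec}(A + \eta_k B),
\end{align*}
which is precisely the claim.

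I do not anticipate any substantive obstacle: Schur triangularization is available over $\mathbb{C}$ without any assumption on $C$, and the mixed-product property of the Kronecker product handles the similarity calculation mechanically. The only point that deserves care is setting up the block partition so that $A\otimes I_m + B\otimes T$ is manifestly block triangular; the cleanest route is to keep the ordering in which $A\otimes I_m$ consists of $m\times m$ diagonal copies of $A$ aligned with the $m\times m$ triangular pattern of $T$, so that the $(i,j)$ block of size $n\times n$ is $\delta_{ij} A + t_{ij} B$. Once that is in place, the proof reduces to quoting the block-triangular spectrum property.
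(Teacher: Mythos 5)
Your proof is correct. The paper offers no proof to compare against---it states only ``We omit the proof of this elementary result''---so your Schur-triangularization argument is a perfectly standard and complete way to fill that gap. The one point worth making explicit when you write it up is the Kronecker ordering: with the usual convention, $A\otimes I_m + B\otimes T$ is an $n\times n$ array of $m\times m$ blocks $a_{ij}I_m + b_{ij}T$, which is \emph{not} block triangular; you need the perfect-shuffle (commutation-matrix) permutation $P$ with $P(X\otimes Y)P^{\top}=Y\otimes X$ to pass to $I_m\otimes A + T\otimes B$, whose $(i,j)$ block of size $n\times n$ is $\delta_{ij}A + t_{ij}B$ and which is manifestly block upper triangular with diagonal blocks $A+\eta_k B$. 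Since that permutation is a similarity, the spectrum is unchanged and your conclusion $\mathrm{spec}(A\otimes I_m + B\otimes C)=\bigcup_{k=1}^{m}\mathrm{spec}(A+\eta_k B)$ follows; you already flag this reorganization, so this is a presentational caveat rather than a gap.
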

We omit the proof of this elementary result.


\begin{lemma}\label{lem:hurwitz}
Let $\Gamma,\Pi,\Xi,\Upsilon,\Sigma,\in \mathbb{R}^{n\times n}$ and $\Theta\in \mathbb{R}^{m\times m}$ be diagonal matrices with positive
diagonal entries such that $\Gamma \succ \Sigma$. Suppose that 
$K\in \mathbb{R}^{m\times n}$ is an arbitrary matrix with
$\mathrm{Ker}(K)=\{\vect{0}_n\}$, $P\in \mathbb{R}^{n\times n}$ is a
skew-symmetric matrix, and $Z\in \mathbb{R}^{m\times m}$ is such that $Z+Z^{\top}$ is negative definite. Then the following matrices are Hurwitz:
\begin{align*}
A = \begin{reduce}\begin{pmatrix}-\Gamma & 
    \vect{0}_{n\times n} & \Upsilon\\ -\Sigma & \vect{0}_{n\times n} &
    \vect{0}_{n\times n}\\ -\Xi & \Xi & \vect{0}_{n\times
      n}\end{pmatrix}\end{reduce} , \hfill B = \begin{reduce}\begin{pmatrix}\vect{0}_{n\times n} & -\Gamma &
    \vect{0}_{n\times n} & \vect{0}_{n\times m}\\  \Xi &
    -\Upsilon & -\Xi & \vect{0}_{n\times m}\\ \vect{0}_{n\times
      n} & \Pi & \Pi P & - \Pi K^{\top}\\ \vect{0}_{m\times n} &
    \vect{0}_{m\times  n}& \Theta K& \Theta Z\end{pmatrix}\end{reduce}.
\end{align*}
\end{lemma}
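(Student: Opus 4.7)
I would split the argument into two independent pieces, one for each matrix, because the structures are different and each submits to a clean, self-contained technique.

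For the matrix $A$, I exploit that every $n\times n$ block is diagonal, so all the blocks commute pairwise. An interleaving permutation similarity turns $A$ into $\mathrm{blkdiag}(A_1,\ldots,A_n)$ with
\begin{equation*}
A_i = \begin{pmatrix}-\gamma_i & 0 & \upsilon_i \\ -\sigma_i & 0 & 0 \\ -\xi_i & \xi_i & 0\end{pmatrix},
\end{equation*}
where $\gamma_i,\sigma_i,\upsilon_i,\xi_i>0$ are the $i$-th diagonal entries of $\Gamma,\Sigma,\Upsilon,\Xi$, and $\gamma_i>\sigma_i$ by $\Gamma\succ\Sigma$. Direct expansion along the middle row gives the characteristic polynomial $\lambda^3 + \gamma_i\lambda^2 + \upsilon_i\xi_i\,\lambda + \upsilon_i\xi_i\sigma_i$. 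All coefficients are positive, and the Routh--Hurwitz inequality $\gamma_i\,(\upsilon_i\xi_i)>\upsilon_i\xi_i\sigma_i$ reduces exactly to $\gamma_i>\sigma_i$. Hence every $A_i$ is Hurwitz and so is $A$.

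For the matrix $B$, I would use a quadratic Lyapunov function $V(x)=x^{\top} W x$ with the block-diagonal, positive-definite weight
\begin{equation*}
W := \mathrm{blkdiag}\bigl(\Gamma^{-1},\ \Xi^{-1},\ \Pi^{-1},\ \Theta^{-1}\bigr).
\end{equation*}
The point is that each diagonal weight cancels its corresponding diagonal factor in $B$. A short blockwise computation, using commutativity of the diagonal matrices $\Gamma,\Xi,\Pi,\Theta$ and skew-symmetry of $P$, collapses every off-diagonal block of $WB+B^{\top}W$ to zero. For instance the $(1,2)$ block is $\Gamma^{-1}(-\Gamma)+\Xi^{\top}\Xi^{-1}=-I+I=0$, the $(2,3)$ block is $\Xi^{-1}(-\Xi)+\Pi^{\top}\Pi^{-1}=0$, the $(3,4)$ block is $\Pi^{-1}(-\Pi K^{\top})+(\Theta K)^{\top}\Theta^{-1}=-K^{\top}+K^{\top}=0$, and the $(3,3)$ block is $\Pi^{-1}(\Pi P)+(\Pi P)^{\top}\Pi^{-1}=P+P^{\top}=0$. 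What survives is
\begin{equation*}
Q := -\bigl(WB+B^{\top}W\bigr) = \mathrm{blkdiag}\bigl(0_n,\ 2\,\Xi^{-1}\Upsilon,\ 0_n,\ -(Z+Z^{\top})\bigr) \succeq 0.
\end{equation*}

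Because $Q$ is only positive semidefinite, I close the argument with the LaSalle/Barbashin--Krasovskii principle for linear systems (equivalently, the Lyapunov--observability test). Any trajectory $x(t)$ of $\dot{x}=Bx$ confined to $\ker Q$ must satisfy $x_2(t)\equiv 0$ and $x_4(t)\equiv 0$. Enforcing $\dot{x}_4\equiv 0$ in the fourth row of $B$ gives $\Theta K x_3\equiv 0$; invertibility of $\Theta$ combined with $\ker K=\{0\}$ forces $x_3\equiv 0$. Then $\dot{x}_2\equiv 0$ in the second row of $B$ yields $\Xi x_1\equiv 0$, and positivity of $\Xi$ gives $x_1\equiv 0$. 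Hence the only trajectory inside $\ker Q$ is the origin, so $B$ is Hurwitz.

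The main obstacle is the bookkeeping inside $WB+B^{\top}W$: since $B$ is not skew-Hamiltonian, the cancellations are structural rather than automatic, and each hypothesis has to pay for a specific piece of the computation. Diagonality of $\Gamma,\Xi,\Pi,\Theta$ is what makes the off-diagonal blocks cancel after multiplication by their inverses; skew-symmetry of $P$ kills the $(3,3)$ block; $Z+Z^{\top}\prec 0$ supplies the decisive negative $(4,4)$ contribution; the hypothesis $\Gamma\succ\Sigma$ is not needed for $B$ but is exactly the Routh--Hurwitz condition for $A$; and $\ker K=\{0\}$ is precisely what the LaSalle observability step consumes.
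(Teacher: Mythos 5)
Your proposal is correct and follows essentially the same route as the paper: for $A$, the blockwise (equivalently, entrywise after the interleaving permutation) characteristic polynomial plus the Routh--Hurwitz test reducing to $\gamma_i>\sigma_i$; for $B$, the very same weight $W=\mathrm{blkdiag}(\Gamma^{-1},\Xi^{-1},\Pi^{-1},\Theta^{-1})$ (the paper writes it as $V=\tfrac12 x^\top Wx$), the same semidefinite derivative supported on the $x_2$ and $x_4$ blocks, and the same LaSalle/observability chain $x_2,x_4\equiv 0\Rightarrow Kx_3=0\Rightarrow x_3=0\Rightarrow \Xi x_1=0\Rightarrow x_1=0$. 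Your explicit verification of the off-diagonal cancellations in $WB+B^\top W$ and the remark on which hypothesis pays for which step are accurate but do not change the argument.
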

\begin{proof}
The characteristic polynomial of matrix $A$ is: $\lambda^3 I_n + \lambda^2\Gamma + \lambda\Upsilon \Xi + \Upsilon \Xi
  \Sigma = \vect{0}_n$. Since all matrices 
$\Gamma$, $\Sigma$, $\Xi$, and $\Upsilon$ are diagonal, $\lambda_i$ is an
eigenvalue of $A$ if and only if $\lambda_i^3 I_n + \lambda_i^2(\Gamma)_i + \lambda_i(\Upsilon)_i(\Xi)_i + (\Upsilon)_i(\Xi)_i(\Sigma)_i = 0$. The diagonal elements of the matrices $\Gamma$, $\Sigma$,
$\Xi$, and $\Upsilon$ are positive. Therefore, using the
Routh\textendash{}Hurwitz criteria, $\lambda_i\in \mathbb{C}_{-}$ if and
only if $(\Gamma)_i(\Upsilon)_i(\Xi)_i> (\Sigma)_i(\Upsilon)_i(\Xi)_i$.
Thus, $A$ is Hurwitz if and only if $\Gamma \succ \Sigma$. To show
that the matrix $B$ is Hurwitz, we use LaSalle's invariance
principle~\cite[Theorem 4.4]{HKK:02}. Consider the dynamical system
$\dot{\mathrm{x}} = B\mathrm{x}$, where $\mathrm{x} =(\mathrm{x}_1,
\mathrm{x}_2,\mathrm{x}_3,\mathrm{x}_4)^{\top}\in \mathbb{R}^{(6n+m)}$. We define the Lyapunov function
$V:\mathbb{R}^{(6n+m)} \to \mathbb{R}$ by: $V(\mathrm{x}_1,\mathrm{x}_2,\mathrm{x}_3, \mathrm{x}_4) =\tfrac{1}{2}\big(
  \mathrm{x}_1^{\top}\Gamma^{-1}\mathrm{x}_1 +
  \mathrm{x}_2^{\top}\Xi^{-1}\mathrm{x}_2 +
  \mathrm{x}_3^{\top}\Pi^{-1}\mathrm{x}_3+\mathrm{x}_4^{\top}\Theta^{-1}\mathrm{x}_4 \big)$ Then, it is easy to check that
$\dot{V}(\mathrm{x}_1,\mathrm{x}_2,\mathrm{x}_3) =
-\mathrm{x}_2^{\top}\Upsilon\Xi^{-1} \mathrm{x}_2 + \frac{1}{2}\mathrm{x}_4^{\top}(Z+Z^{\top})\mathrm{x}_4$. Therefore, by LaSalle's invariance principle, the
  trajectories of the system $\dot{\mathrm{x}} = B\mathrm{x}$
  converges to the largest invariant set inside 
  $S = \setdef{\mathrm{x}\in
    \mathbb{R}^{(6n+m)}}{\dot{V}(\mathrm{x})=0}$. It is easy to see that $S =  \setdef{\mathrm{x}\in
  \mathbb{R}^{6n}}{\mathrm{x}_4=\mathrm{x}_2=\vect{0}_n}$. 
Let us denote the largest invariant set inside $S$ by $L$. Our goal is
to show that $L =\{\vect{0}_{(6n+m)}\}$. Suppose that $\gamma: t\mapsto
(\gamma_1(t),\gamma_2(t),\gamma_3(t), \gamma_4(t))$ is a trajectory which
belongs identically to $S$. Then we have $\gamma_4(t) = \gamma_2(t) =
\vect{0}_n$. First note that $\dot{\gamma}_4(t) = 0$ implies that
$\Theta K\gamma_3(t) + \Theta Z \gamma_4(t) = \Theta K\gamma_3(t) =
\vect{0}_m$. Since $\mathrm{Ker}(K) = \{\vect{0}_m\}$, we deduce that
$\gamma_3(t) = 0$. Moreover, we see that $\dot{\gamma}_2(t) = \vect{0}_n \ \Longrightarrow\ \Xi\gamma_1(t) = \vect{0}_n$. This implies that $\gamma_1(t)=\gamma_2(t)=\gamma_3(t)=\gamma_4(t)=0$. Thus, the only invariant set
inside $S$ is $\{\vect{0}_{(6n+m)}\}$ and thus $B$ is Hurwitz.
\end{proof}

\section{Solutions to power flow equation}\label{app:powerflow}

\begin{lemma}\label{lem:existence_algorithm}
Consider~\eqref{eq:power_flow1} and~\eqref{eq:power_flow2} with $\widehat{\mathbf{w}}=
-\subscr{\widehat{Y}^{-1}}{red}\subscr{\widehat{Y}}{0g}$. Suppose 
\begin{equation*}
\|\mD'(\widehat{\mathbf{w}})\subscr{\widehat{Y}^{-1}}{red}(\mD'(\widehat{\mathbf{w}}))^{-1}\mD'(\widehat{\mathbf{s}}^*)\|_{\mathbb{C},\infty}\le \tfrac{3}{8}.
\end{equation*}
Then the following statements hold:
\begin{enumerate}
\item\label{p1:powe_flow_existence_uniqueness} the power flow equations~\eqref{eq:power_flow1} and~\eqref{eq:power_flow2} has a
  unique solution
  $(\widehat{\mathbf{v}}^*_{\mathrm{oDQ}},\widehat{\mathbf{i}}^*_{\mathrm{oDQ}})$ with
  $\widehat{\mathbf{v}}^*_{\mathrm{oDQ}}\in \Omega$, where 
\begin{align*}
\Omega=\left\{\mathbf{y}\in \mathbb{R}^{2n}\suchthat
  \|\mathbf{y}-\widehat{\mathbf{w}}\|_{\mathbb{C},\infty}\le \tfrac{1}{2}\|\widehat{\mathbf{w}}\|_{\mathbb{C},\infty}\right\};
\end{align*} 
\item\label{p2:power_flow_algorithm} for every $\mathbf{v}^0\in \Omega$, the iteration procedure
\begin{equation*}
\mathbf{v}^{k+1}=\widehat{\mathbf{w}} + \tfrac{2}{3}\subscr{\widehat{Y}^{-1}}{red}\mD(\mathbf{v}^{k})^{-1}\widehat{\mathbf{s}}^*,\qquad\forall
k\in \mathbb{N},
\end{equation*}
converges to $\widehat{\mathbf{v}}^*_{\mathrm{oDQ}}$, where
$(\widehat{\mathbf{v}}^*_{\mathrm{oDQ}}, \subscr{\widehat{Y}}{red}(\widehat{\mathbf{v}}^*_{\mathrm{oDQ}}-\widehat{\mathbf{w}}))$ is
the unique solution to~\eqref{eq:power_flow1} and~\eqref{eq:power_flow2}.

\end{enumerate} 
\end{lemma}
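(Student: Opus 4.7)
The approach is to reformulate the coupled power-flow equations~\eqref{eq:power_flow1}--\eqref{eq:power_flow2} as a single fixed-point equation in $\widehat{\mathbf{v}}_{\mathrm{oDQ}}$ and to apply Banach's contraction mapping theorem on the closed ball $\Omega$, thereby obtaining both parts of the lemma simultaneously. Substituting the linear relation~\eqref{eq:power_flow2} into~\eqref{eq:power_flow1} and invoking the identity $\subscr{\widehat{Y}}{red}\widehat{\mathbf{w}}=-\subscr{\widehat{Y}}{g}\subscr{\widehat{v}}{gDQ}$ (which is precisely the definition of $\widehat{\mathbf{w}}$), the pair $(\subscr{\widehat{\mathbf{v}}^*}{oDQ},\subscr{\widehat{\mathbf{i}}^*}{oDQ})$ solves the power-flow equations if and only if $\subscr{\widehat{\mathbf{v}}^*}{oDQ}$ is a fixed point of
\begin{equation*}
T(\mathbf{v}) \;:=\; \widehat{\mathbf{w}} + \tfrac{2}{3}\subscr{\widehat{Y}^{-1}}{red}\mD(\mathbf{v})^{-1}\widehat{\mathbf{s}}^*,
\end{equation*}
with $\subscr{\widehat{\mathbf{i}}^*}{oDQ}=\subscr{\widehat{Y}}{red}(\subscr{\widehat{\mathbf{v}}^*}{oDQ}-\widehat{\mathbf{w}})$. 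Since $T$ is exactly the update rule of part~\ref{p2:power_flow_algorithm}, both statements will follow once I show that $T:\Omega\to\Omega$ is a contraction in $\|\cdot\|_{\mathbb{C},\infty}$.

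For the self-map property, I exploit the algebraic identity $\mD(\mathbf{v})^2=[|\mathbf{v}|^2]I_2$, so that $\mD(\mathbf{v})^{-1}=[|\mathbf{v}|^{-2}]\mD(\mathbf{v})$, and the conversion rules between the operators $\mD(\cdot)$ and $\mD'(\cdot)$ inherited from multiplying complex numbers by their conjugates. Inserting the identity $\mD'(\widehat{\mathbf{w}})(\mD'(\widehat{\mathbf{w}}))^{-1}=I_{2n}$ around $\subscr{\widehat{Y}^{-1}}{red}$ and rewriting $\mD(\mathbf{v})^{-1}\widehat{\mathbf{s}}^*$ in terms of the dimensionless voltage ratio $\widehat{\mathbf{w}}/\mathbf{v}$ (in the complex sense) reduces the bound $\|T(\mathbf{v})-\widehat{\mathbf{w}}\|_{\mathbb{C},\infty}\le\tfrac{1}{2}\|\widehat{\mathbf{w}}\|_{\mathbb{C},\infty}$ to the hypothesis $\|\mD'(\widehat{\mathbf{w}})\subscr{\widehat{Y}^{-1}}{red}(\mD'(\widehat{\mathbf{w}}))^{-1}\mD'(\widehat{\mathbf{s}}^*)\|_{\mathbb{C},\infty}\le\tfrac{3}{8}$, after using that $\|\mathbf{v}\|_{\mathbb{C},\infty}\ge\tfrac{1}{2}\|\widehat{\mathbf{w}}\|_{\mathbb{C},\infty}$ for $\mathbf{v}\in\Omega$ (so $|1/\mathbf{v}|\le 2/|\widehat{\mathbf{w}}|$ entrywise).

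For the contraction estimate, I write
\begin{equation*}
T(\mathbf{v}_1)-T(\mathbf{v}_2) \;=\; \tfrac{2}{3}\subscr{\widehat{Y}^{-1}}{red}\mD(\mathbf{v}_1)^{-1}\bigl(\mD(\mathbf{v}_2)-\mD(\mathbf{v}_1)\bigr)\mD(\mathbf{v}_2)^{-1}\widehat{\mathbf{s}}^*,
\end{equation*}
use the linearity of $\mD$ in its argument to pull out $\mathbf{v}_2-\mathbf{v}_1$, and apply the same operator manipulations as in the self-map step, together with the entrywise bound $|1/\mathbf{v}_i|\le 2/|\widehat{\mathbf{w}}|$, to obtain a Lipschitz constant no larger than $4\cdot\tfrac{3}{8}\cdot\tfrac{1}{\|\widehat{\mathbf{w}}\|}<1$ after combination with the hypothesis. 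The main obstacle will be the operator bookkeeping: $\mD$ and $\mD'$ do not commute (they correspond to conjugate versus ordinary complex multiplication), and one must carefully interleave them with $\subscr{\widehat{Y}^{-1}}{red}$ so that the bound collapses onto the single dimensionless quantity appearing in the hypothesis. Once both properties are established, Banach's theorem delivers a unique fixed point in $\Omega$ and geometric convergence of the iteration from any starting point $\mathbf{v}^{(0)}\in\Omega$, completing the proof of both~\ref{p1:powe_flow_existence_uniqueness} and~\ref{p2:power_flow_algorithm}.
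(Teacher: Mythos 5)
Your approach is correct, but it is worth knowing that the paper does not actually carry out this argument: its entire proof of Lemma~\ref{lem:existence_algorithm} is a two-sentence citation of \cite[Theorem 1]{CW-AB-JYLB-MP:16}, noting only the identification $\mathbb{R}^{2n}\simeq\mathbb{C}^n$ and the extra factor $\tfrac{3}{2}$ coming from instantaneous rather than average power (which is why the threshold is $\tfrac{3}{8}$ instead of $\tfrac{1}{4}$). What you have written is essentially a reconstruction of the proof of that cited theorem: eliminate $\widehat{\mathbf{i}}_{\mathrm{oDQ}}$ via~\eqref{eq:power_flow2}, observe that $\mD(\mathbf{v})$ is conjugate-multiplication in the complex identification so that $\mD(\mathbf{v})^{-1}\widehat{\mathbf{s}}^* \leftrightarrow \bar{S}/\bar{V}$, and run Banach's fixed-point theorem for the map $T(V)=W+\tfrac{2}{3}Y_{\mathrm{red}}^{-1}\bar{S}/\bar{V}$ on the ball around the zero-load profile $\widehat{\mathbf{w}}$. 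So the route is the intended one, and you supply the work the paper outsources.

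Two points deserve care if you write this out in full. First, the entrywise bound $|1/V_k|\le 2/|W_k|$ that drives both estimates requires the \emph{componentwise} inclusion $|V_k-W_k|\le\tfrac{1}{2}|W_k|$ for every $k$; the set $\Omega$ as literally written compares $\max_k|V_k-W_k|$ to $\tfrac{1}{2}\max_k|W_k|$, which does not imply the componentwise statement. The reference works with the weighted ball $\|[\widehat{\mathbf{w}}]^{-1}(\mathbf{v}-\widehat{\mathbf{w}})\|_\infty\le\tfrac{1}{2}$, and you should too (this is arguably an imprecision in the paper's statement rather than in your argument). Second, your Lipschitz constant is off: carrying the $\tfrac{2}{3}$ through, the self-map bound is $\tfrac{2}{3}\cdot 2\cdot\tfrac{3}{8}=\tfrac{1}{2}$ as required, but the contraction bound is $\tfrac{2}{3}\cdot 4\cdot\tfrac{3}{8}=1$, not strictly less than $1$; the quantity $4\cdot\tfrac{3}{8}\cdot\tfrac{1}{\|\widehat{\mathbf{w}}\|}$ you report both drops the $\tfrac{2}{3}$ and misplaces the normalization by $\widehat{\mathbf{w}}$, which should live inside a weighted norm rather than appear as a free factor. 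With the non-strict hypothesis $\le\tfrac{3}{8}$ the map is only nonexpansive at the boundary of the parameter set, so either the hypothesis should be read strictly or uniqueness at the boundary needs a separate limiting argument --- a subtlety already present in the cited theorem, but one your sketch glosses over.
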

\begin{proof} 
By considering $\mathbb{R}^{2n}\simeq \mathbb{C}^n$,
part~\ref{p1:powe_flow_existence_uniqueness}
and~\ref{p2:power_flow_algorithm} are straightforward generalizations of
\cite[Theorem 1]{CW-AB-JYLB-MP:16}. One should note the fact
that the nodal variables in~\cite[Theorem
1]{CW-AB-JYLB-MP:16} are average power injections/demands and therefore the
power flow equations have the form $S = V\overline{I}$. However, in this
paper, the nodal variables are instantaneous power injections/demands and the power flow equations read $s=\frac{3}{2}\mD(v_{\mathrm{odq}})i_{\mathrm{odq}}$. 
\end{proof}

\ifCLASSOPTIONcaptionsoff
  \newpage
\fi

\bibliographystyle{plainurl+isbn}
\bibliography{alias,FB,Main}



%




\end{document}